\theoremstyle{plain}
\declaretheorem{theorem}
\newcommand{\parentheses}[1]{{\left( {#1} \right)}}
\newcommand{\p}{\parentheses}
\newcommand{\Set}[1]{{\left\lbrace {#1} \right\rbrace}}
\newcommand{\singleton}{\Set}
\def\set#1:#2{\Set{{#1} \colon {#2}}}
\numberwithin{theorem}{section}
\newtheorem{lemma}[theorem]{Lemma}
\newtheorem{exam}[theorem]{Example}
\newtheorem*{claim*}{Claim}
\newtheorem{ques}[theorem]{Question}
\newtheorem{defn}[theorem]{Definition}
\newtheorem{conj}[theorem]{Conjecture}
\newtheorem{step}{Step}
\newcommand{\N}{\mathbb{N}}
\begin{document}
\title{Hamilton cycles in infinite cubic graphs}   

\author{Max F.\ Pitz}
\address{Hamburg University, Department of Mathematics, Bundesstra\ss e 55 (Geomatikum), 20146 Hamburg, Germany}
\email{max.pitz@uni-hamburg.de}

\keywords{uniquely Hamiltonian, infinite Hamilton cycle, cubic graph}

\subjclass[2010]{05C45, 05C07}     

\begin{abstract}
Investigating a problem of B.~Mohar, we show that every one-ended Hamiltonian cubic graph with end degree $3$ contains a second Hamilton cycle. We also construct two examples showing that this result does not extend to give a third Hamilton cycle, nor that it extends to the two-ended case. 
\end{abstract}

\maketitle
\section{Overview}

In this note we investigate whether results about the Hamiltonicity of finite cubic graphs extend to the infinite setting. The term `graph' in this paper is reserved for simple graphs; when allowing parallel edges or loops, we explicitly use the term `multi-graph'. Our terminology follows \cite{Diestel}.

\subsection{Hamiltonicity in finite regular graphs}
The starting point of this paper are the following results and conjectures for finite regular graphs. 

\begin{theorem}[Smith '46, see \cite{tutte}]
\label{thm_finitecubic}
Every Hamiltonian finite cubic graph has at least two Hamilton cycles.
\end{theorem}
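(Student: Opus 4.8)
The plan is to prove the cleaner quantitative statement that in a cubic graph $G$ the number of Hamilton cycles through any fixed edge is \emph{even}; the theorem follows at once, since if $C$ is the given Hamilton cycle and $e$ is any edge of $C$, then the number of Hamilton cycles containing $e$ is a positive even number, hence at least two, and these are distinct Hamilton cycles of $G$. To count Hamilton cycles through a fixed edge $e = xy$, I would use Thomason's \emph{rotation} (or \emph{lollipop}) technique. Fix the endpoint $x$ and consider the set $\mathcal{P}$ of all Hamilton paths of $G$ that begin $x, y, \dots$, that is, that have $x$ as an endpoint and use $e$ as their first edge. Deleting the second edge at $x$ from a Hamilton cycle through $e$ sets up a bijection between Hamilton cycles through $e$ and those paths in $\mathcal{P}$ whose far endpoint is adjacent to $x$ (the missing edge being exactly the one that closes the path back up), so it suffices to show that the number of such paths is even.

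Next I would build an auxiliary multigraph $L$ on the vertex set $\mathcal{P}$. Given a path $P = x, y, \dots, z \in \mathcal{P}$ with far endpoint $z$, the vertex $z$ has three neighbours in $G$; one is its predecessor on $P$, and the remaining two, say $u_1$ and $u_2$, necessarily lie on $P$ because $P$ is spanning. For each $u_i$ one performs the \emph{rotation} that deletes the edge of $P$ immediately following $u_i$ (on the way towards $z$) and adds the edge $z u_i$; this produces a new path in $\mathcal{P}$ with the same first edge $e$ but a new far endpoint, and I would join $P$ to each resulting path in $L$. The heart of the argument is then a local degree count: each such rotation is reversible — rotating the new path at \emph{its} far endpoint recovers $P$ — so adjacency in $L$ is symmetric, and a rotation at $u_i$ is legal precisely when it does not destroy the first edge, that is, precisely when $u_i \ne x$.

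It follows that every vertex of $L$ has degree $1$ or $2$: both rotations are available unless the far endpoint $z$ happens to be adjacent to $x$, in which case exactly one of the two rotations would delete the distinguished edge $xy$ and is forbidden, leaving degree $1$. But $z$ adjacent to $x$ is exactly the condition that $P$ closes up, via the edge $zx$, to a Hamilton cycle through $e$; hence the degree-$1$ vertices of $L$ are in bijection with the Hamilton cycles through $e$. Since $L$ has maximum degree $2$, the handshake lemma forces the number of its degree-$1$ vertices to be even, which completes the count. I expect the main obstacle to be the careful verification of this degree computation: checking that the two rotations at $z$ are well defined and that adjacency in $L$ is genuinely symmetric, and — most delicately — confirming that the \emph{only} way a rotation can fail is by removing the distinguished first edge $e$. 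It is precisely this last point that pins the degree to $1$ rather than $2$ exactly on the paths corresponding to Hamilton cycles, and getting it right (together with the edge cases where $z$ is close to $x$ on the path) is where the real work lies.
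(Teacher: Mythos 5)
Your argument is correct, but note that the paper itself gives no proof of this statement: it is quoted as a classical result (Smith '46, via \cite{tutte}), and the parity strengthening you actually establish --- that in a cubic graph every fixed edge lies on an even number of Hamilton cycles --- is quoted separately as Theorem~\ref{thm_finitecubicthomason} (Thomason \cite{thomason}). What you have reconstructed is precisely Thomason's lollipop proof, specialised to the cubic case, so your route differs from the source cited for Smith's theorem: Tutte's original 1946 argument is a nonconstructive parity count tailored to cubic graphs, whereas the rotation argument is local and reversible, generalises to all graphs in which every vertex has odd degree, and yields the stronger ``even number through each edge'' conclusion (hence three Hamilton cycles in the finite setting) --- which is exactly the form the paper later needs in its proof of Theorem~\ref{thm_1ended3deg}. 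The points you flag as delicate do all check out: a spanning path $P = x, y, \dots, z$ of a cubic graph has at least four vertices, so $x$ is never the predecessor of $z$ and $z \notin \{x,y\}$; consequently $z$ has exactly two admissible pivot vertices $u_1, u_2$, of which at most one equals $x$, the deleted edge at $u_i$ is the first edge $xy$ precisely when $u_i = x$, the two rotations produce distinct paths (their endpoints are the distinct successors of $u_1$ and $u_2$ along $P$), and rotation is an involution, so $L$ is a well-defined graph with all degrees in $\{1,2\}$. Finally, the correspondence between degree-one vertices of $L$ and Hamilton cycles through $e$ is a genuine bijection because $x$ lies on exactly two edges of any such cycle, one of which is $e$, so deleting the other is well defined and inverse to closing up the path; the handshake lemma then gives the even count, and the hypothesis supplies one cycle through $e$, hence a second, as required.
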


Here, a graph is \emph{Hamiltonian} if it contains a Hamilton cycle. A graph with precisely one Hamilton cycle is also called \emph{uniquely Hamiltonian}. Sheehan conjectured that finite cycles are the only examples of uniquely Hamiltonian regular graphs. 

\begin{conj}[Sheehan '75, \cite{sheehan}]
Every $d$-regular Hamiltonian finite graph with $d \geq 3$ has at least two Hamilton cycles.
\end{conj}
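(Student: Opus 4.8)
The plan is to split the conjecture according to the parity of $d$, because the cubic case $d=3$ (which is exactly Theorem~\ref{thm_finitecubic}) is proved by an argument that generalises verbatim to all \emph{odd} $d$, whereas the \emph{even} case is of a genuinely different nature and is where I expect all the difficulty to concentrate.

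First, for odd $d$ I would run Thomason's lollipop/rotation argument. Fix an edge $x_0x_1$ and let $H$ be the auxiliary graph whose vertices are the Hamilton paths $x_0x_1\cdots x_n$ beginning with this edge, with two such paths joined when one is obtained from the other by a rotation at the free end $x_n$ (add an edge $x_nx_j$ with $x_j$ a neighbour of $x_n$ on the path, delete $x_jx_{j+1}$). Since the graph is Hamiltonian every neighbour of $x_n$ lies on the path, so the degree of a vertex of $H$ is $\deg(x_n)-1=d-1$ unless $x_n$ is adjacent to the root $x_0$, in which case the only forbidden move (deleting the fixed edge $x_0x_1$) drops the degree to $d-2$ and the path corresponds to a Hamilton cycle through $x_0x_1$. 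For $d$ odd, $d-1$ is even and $d-2$ is odd, so the odd-degree vertices of $H$ are \emph{exactly} the Hamilton cycles through $x_0x_1$; by the handshake lemma their number is even, and since a Hamiltonian graph supplies at least one, a second Hamilton cycle exists. This settles every odd $d\geq 3$ and recovers Theorem~\ref{thm_finitecubic} as the case $d=3$.

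The hard part is even $d$, and here the plan above breaks down in an instructive way: when $d$ is even the parities reverse, so it is the non-cycle paths that have odd degree $d-1$ while the cycle-completing paths have even degree $d-2$, and the handshake count now tells us nothing about the number of Hamilton cycles. To make progress I would abandon pure parity and try one of three routes. For large even $d$ I would pursue a Thomassen-style argument, extracting a second Hamilton cycle from an independent dominating set or a suitable edge-colouring obstruction, an approach that is known to succeed once $d$ is sufficiently large. For the residual small even degrees ($d=4,6,\dots$) I would look for either a degree-lowering reduction that builds a cubic auxiliary structure whose Hamilton cycles inject into those of $G$, or a signed/weighted refinement of the lollipop graph that restores an even count. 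My honest expectation is that the odd case is essentially routine once the rotation mechanism is in place, and that the even case is exactly the open heart of the conjecture: absent a replacement for Thomason's parity, there is no known uniform way to force the second cycle, which is precisely why the statement remains a conjecture rather than a theorem.
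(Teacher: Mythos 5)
You have not proved the statement, and there is no proof in the paper to compare against: this is Sheehan's \emph{conjecture}, quoted by the paper without proof, and it remains open to this day --- precisely because of the even case you isolate. Your odd case is sound, but note that it is not new relative to the paper: the lollipop/rotation argument you run is exactly the proof of Thomason's Theorem~\ref{thm_finitecubicthomason}, which already appears in the paper (as a cited result) and which immediately settles all odd $d$, since a graph that is $d$-regular for odd $d$ is a graph ``with odd degrees only''. Your degree count in the auxiliary graph $H$ is correct: a Hamilton path $x_0x_1\cdots x_n$ with fixed first edge admits $d-1$ rotations when $x_n\not\sim x_0$ and $d-2$ when $x_n\sim x_0$ (the rotation through $x_0$ would delete the fixed edge $x_0x_1$), so for odd $d$ the odd-degree vertices of $H$ are exactly the cycle-completing paths, and the handshake lemma gives an even number of Hamilton cycles through $x_0x_1$. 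One small slip: you do not need Hamiltonicity of $G$ to conclude that every neighbour of $x_n$ lies on the path --- that is automatic for a Hamilton path; Hamiltonicity is needed only to choose the root edge $x_0x_1$ on some Hamilton cycle, so that the even count is at least two.

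The genuine gap is the one you name yourself: for even $d$ the parities flip and the handshake count says nothing about Hamilton cycles, and none of your three suggested repairs is known to work uniformly. Thomassen's result cited in the paper (\cite{Thomassen}) does handle regular graphs of sufficiently large degree via independent dominating sets, but the small even degrees --- already $d=4$ --- are open, and no signed or weighted surrogate for Thomason's parity is known. So your proposal should be read as a correct account of the status of the conjecture (odd case classical via Theorem~\ref{thm_finitecubicthomason}, large degree via Thomassen, small even degree open), not as a proof of it; any submission claiming to prove the statement as written would have to close exactly the even-degree gap you flag.
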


For more details on Sheehan's conjecture, we refer the reader to \cite{Thomassen}.

Using a nice parity argument, the so-called ``lollypop technique'', Thomason extended Smith's result in a different direction as follows:

\begin{theorem}[Thomason '78, \cite{thomason}]
\label{thm_finitecubicthomason}
Every edge in a finite graph with odd degrees only lies on an even number of Hamilton cycles. Hence, every Hamiltonian such graph has at least three Hamilton cycles.
\end{theorem}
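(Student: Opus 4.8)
The plan is to establish the stronger, edge-wise statement first, since the ``at least three Hamilton cycles'' conclusion then drops out of a short counting argument. So I fix an arbitrary edge $e = ab$ of the graph $G$ (all of whose degrees are odd) and aim to show that the number of Hamilton cycles containing $e$ is even.

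The engine is Thomason's \emph{lollipop} rotation. I would consider the collection $\mathcal{P}$ of all Hamilton paths of $G$ that begin with the oriented edge $a, b$; that is, paths $P = v_0 v_1 \cdots v_{n-1}$ with $v_0 = a$ and $v_1 = b$, and I think of $v_{n-1}$ as the mobile \emph{free end}. On $\mathcal{P}$ I define an auxiliary graph $\mathcal{L}$ by joining two such paths whenever one arises from the other by a single rotation at the free end: given $P$ with free end $w = v_{n-1}$ and a neighbour $v_i$ of $w$ in $G$ with $1 \le i \le n-3$, adding the edge $w v_i$ and deleting $v_i v_{i+1}$ yields the new Hamilton path $v_0 \cdots v_i\, v_{n-1} v_{n-2} \cdots v_{i+1}$, again starting $a, b$ and now with free end $v_{i+1}$. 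I would first check that this relation is symmetric (rotating the new path back recovers $P$) and creates neither loops nor multiple edges, so that $\mathcal{L}$ is a genuine graph on $\mathcal{P}$.

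The crux is then a parity count of the degree of a fixed $P \in \mathcal{P}$ in $\mathcal{L}$. Its free end $w$ has $\deg_G(w)$ neighbours in $G$, all of which lie among $v_0, \ldots, v_{n-2}$; exactly one of these is the predecessor $v_{n-2}$ (which yields no rotation), the neighbours $v_i$ with $1 \le i \le n-3$ each yield exactly one $\mathcal{L}$-neighbour of $P$, and the single remaining possibility is that $w$ is adjacent to the fixed start $a = v_0$. Since every degree of $G$ is odd, $\deg_G(w) - 1$ is even, and hence $\deg_{\mathcal{L}}(P)$ is odd \emph{precisely} when $w$ is adjacent to $a$. But $w$ adjacent to $a$ means exactly that adding the edge $w a$ closes $P$ into a Hamilton cycle through $e$, and this closing-up sets up a bijection between the odd-degree vertices of $\mathcal{L}$ and the Hamilton cycles of $G$ containing $e$. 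Applying the handshake lemma to $\mathcal{L}$ — which forces an even number of odd-degree vertices — then gives that $e$ lies on an even number of Hamilton cycles. The main thing to get right is this degree bookkeeping at the free end, and in particular the observation that oddness of the degrees is exactly what makes the closing edge the unique source of odd parity.

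Finally, I would deduce the corollary. If $G$ is Hamiltonian, pick a Hamilton cycle $C$ and an edge $e \in C$; by the above, the count of Hamilton cycles through $e$ is even and at least one, hence at least two, producing a second cycle $C' \neq C$. As $C$ and $C'$ are distinct cycles on the same vertex set, some edge $e' \in C \setminus C'$ exists; applying the edge-wise statement to $e'$ yields an even, hence at least two, number of Hamilton cycles through $e'$, one of which is $C$, so there is a further cycle $C''$ through $e'$ with $C'' \neq C$. Since $C'$ avoids $e'$ while $C''$ contains it, $C''$ also differs from $C'$, and $C, C', C''$ are three distinct Hamilton cycles.
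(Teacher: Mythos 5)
Your proof is correct and is precisely Thomason's original lollipop argument, which the paper does not reprove but quotes with a citation --- it is exactly the ``nice parity argument, the so-called lollypop technique'' alluded to in the surrounding text. Your bookkeeping checks out at every point: the degree of a path $P$ with free end $w$ in the auxiliary graph $\mathcal{L}$ is $\deg_G(w)-1$, reduced by one more exactly when $wa \in E(G)$, so oddness of all degrees in $G$ makes the odd-degree vertices of $\mathcal{L}$ biject (via adding, respectively deleting, the closing edge at $a$) with the Hamilton cycles through $e$, the handshake lemma gives the even count, and your two-step deduction of a third cycle via an edge $e' \in E(C)\setminus E(C')$ is the standard and correct way to obtain the second assertion.
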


In particular, every finite Hamiltonian cubic graph contains at least three Hamilton cycles.

\subsection{Infinite Hamilton circles}
For a locally finite graph $G$, which can be considered as a topological space using the $1$-complex topology, we let $|G|$ denote its Freudenthal compactification. Extending the notion of cycles, one defines \emph{circles} in $|G|$ as homeomorphic images of the unit circle $|G|$, see \cite[\S8]{Diestel}. A circle of $|G|$ is a \emph{Hamilton circle}, if it contains all vertices (and all ends) of $G$. We use the term \emph{Hamilton cycle} to denote the subgraph induced by a Hamilton circle of $|G|$.

In one-ended graphs, Hamilton cycles correspond to spanning double rays. In a two-ended graph $G$, a Hamilton cycle consists of two vertex-disjoint double rays $R_1$ and $R_2$ which together span $G$, such that the two tails of each $R_i$ belong to different ends of $G$. For example, the 2-ended double ladder in Figure~\ref{infdoubleladder} has a unique Hamilton cycle comprised of all horizontal edges.

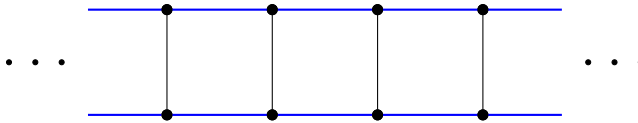
\begin{figure}[ht!]
\centering
\begin{tikzpicture}[scale=.7]

\foreach \s in {1,...,4}
{
\node[draw, circle,scale=.4, fill] (blobu\s) at (2*\s,1) {};
\node[draw, circle,scale=.4, fill] (blobl\s) at (2*\s,-1) {};
}

\foreach \s in {1,...,3}
{
\pgfmathsetmacro\t{\s+1}
\draw[blue, thick] (blobu\s) -- (blobu\t);
\draw[blue, thick] (blobl\s) -- (blobl\t);
\draw (blobu\s) -- (blobl\s);
}
\draw (blobu4) -- (blobl4);

\draw[blue, thick] (blobu4) -- (9.5,1);
\draw[blue, thick] (blobl4) -- (9.5,-1);

\draw[blue, thick](blobu1) -- (0.5,1);
\draw[blue, thick] (blobl1) -- (0.5,-1);

\node[draw, circle,scale=.2, fill] (blobm1) at (10,0) {};
\node[draw, circle,scale=.2, fill] (blobm1) at (10.5,0) {};
\node[draw, circle,scale=.2, fill] (blobm1) at (11,0) {};

\node[draw, circle,scale=.2, fill] (blobm1) at (0,0) {};
\node[draw, circle,scale=.2, fill] (blobm1) at (-0.5,0) {};
\node[draw, circle,scale=.2, fill] (blobm1) at (-1,0) {};

\foreach \s in {1,...,4}
{
\node[draw, circle,scale=.4, fill] (blobu\s) at (2*\s,1) {};
\node[draw, circle,scale=.4, fill] (blobl\s) at (2*\s,-1) {};
}

\end{tikzpicture}
\caption{The infinite double ladder and its unique Hamilton cycle.}
\label{infdoubleladder}
\end{figure}

\subsection{Questions on Hamiltonicity in infinite regular graphs}

In 2007, Mohar asked to what extent the above results about Hamiltonicity in finite regular graphs generalise to the infinite setting. While the infinite double ladder in Figure~\ref{infdoubleladder} witnesses that Theorem~\ref{thm_finitecubic} fails to extend verbatim to the infinite case, Mohar suggested two possible solutions. 

First, we might restrict out attention to one-ended graphs, and second, we might say that the double ladder is not truly regular, as its ends have degree $2$. Here, we take the \emph{degree of an end} to be the maximum number of edge-disjoint rays leading to that end, see \cite{euler} or Section~\ref{sec_2} below for details.

\begin{ques}[Mohar '07, \cite{mohar}]
\label{ques_1}
Does there exist a uniquely Hamiltonian, one-ended, $d$-regular graph for $d \geq 3$?
\end{ques}

\begin{ques}[Mohar '07, \cite{mohar}]
\label{ques_2}
Does there exist a uniquely Hamiltonian, $d$-regular graph for $d \geq 3$ where also all ends have degree $d$?
\end{ques}

K.~Heuer  \cite{karl} has recently constructed a uniquely Hamiltonian cubic graph with continuum many ends where all ends have degree 3, thus answering Question~\ref{ques_2}. He left open the natural question whether simultaneously restricting the number of ends plus the end-degrees allows us to extend finite theorems to the infinite setting.

\subsection{Results}

In this note, we establish the following extension of Smith's Theorem~\ref{thm_finitecubic} about second Hamilton cycles to the infinite setting, providing a partial answer to Mohar's questions.

\begin{theorem}
\label{thm_mainoverview}
Every Hamiltonian one-ended cubic graph with end degree at most $3$ has at least two Hamilton cycles.
\end{theorem}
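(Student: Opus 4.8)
The plan is to reduce the statement to the finite Smith/Thomason theorems by contracting the single end of $G$ to a vertex, applying the finite result, and taking a limit. First I fix notation. Since $G$ is cubic and the given Hamilton cycle $H$ is a spanning double ray, every vertex lies on exactly two edges of $H$ and one further edge; hence $M := E(G) \setminus E(H)$ is a perfect matching, whose edges I call \emph{chords}. Write $V(G) = \{v_i : i \in \Z\}$ with $v_i v_{i+1} \in E(H)$, so that $M$ is recorded by a fixed-point-free involution $\sigma$ of $\Z$. A second Hamilton cycle is exactly a second spanning double ray $H'$, and since $H$ and $H'$ are both $2$-regular the symmetric difference $H \triangle H'$ is $H$-\emph{alternating}: each of its vertices has degree $0$, or degree $2$ using its chord together with one edge of $H$. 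Thus $H \triangle H'$ is a disjoint union of alternating cycles and alternating double rays, and conversely toggling $H$ along any such nonempty alternating $D$ yields a $2$-regular spanning subgraph $H \triangle D$. The task is therefore to produce a nonempty $D$ for which $H \triangle D$ is \emph{connected}, has no finite cycle, and has both tails in the unique end --- i.e.\ is again a Hamilton cycle.

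As a warm-up that isolates the essential difficulty, observe that if some pair of chords is \emph{interlacing and adjacent}, meaning $\sigma(a+1) = \sigma(a) + 1$ for some $a$, then the alternating $4$-cycle on $v_a, v_{a+1}, v_{\sigma(a+1)}, v_{\sigma(a)}$ toggles $H$ into the double ray obtained by reversing the segment between the two chords; one checks directly that this is connected and keeps both tails in the same end, so it is a genuine second Hamilton cycle. The \emph{nested} configuration $\sigma(a+1) = \sigma(a) - 1$, by contrast, splits off a finite cycle and must be avoided. This crossing-versus-nesting dichotomy is precisely the local shadow of the global connectivity problem, and of why the two-ended double ladder of Figure~\ref{infdoubleladder} is uniquely Hamiltonian.

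For the general case the plan is to contract the end. Using that $G$ is one-ended with end degree at most $3$, I would invoke Halin's theorem on end degrees (Section~\ref{sec_2}) to produce an exhausting sequence of finite connected vertex sets $S_1 \subseteq S_2 \subseteq \cdots$ with $\bigcup_n S_n = V(G)$ such that each $S_n$ is joined to the infinite component $C_n$ of $G - S_n$ by at most three edges, two of them the tails of $H$. Contracting $C_n$ to a single vertex $\omega_n$ (and suppressing $\omega_n$ should it have degree two) then yields a finite cubic multigraph $G_n$ --- every original vertex keeps degree $3$, as its outgoing chord, if any, is redirected to $\omega_n$ --- in which the image $H_n$ of $H$ is a Hamilton cycle through a fixed edge $e$. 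Smith's Theorem~\ref{thm_finitecubic}, or Thomason's Theorem~\ref{thm_finitecubicthomason} when $\omega_n$ has odd degree, now supplies a second Hamilton cycle $H_n' \neq H_n$ of $G_n$. Pulling $H_n'$ back along the contraction, the differences $H \triangle H_n'$ are alternating structures confined near $S_n$ and crossing the size-$\le 3$ cut boundedly; since the cut sizes are bounded and the $S_n$ exhaust $G$, a K\"onig's-lemma compactness argument over the finitely many admissible local patterns extracts a subsequential limit $H'$, a $2$-regular spanning subgraph agreeing with cofinally many $H_n'$ on every finite window.

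The main obstacle --- and the step where the hypotheses are indispensable --- is the end analysis bracketing this reduction: first, that end degree at most $3$ really delivers the size-$\le 3$ cuts capturing the end that make the contracted graphs cubic (or odd); and second, that the limit $H'$ is a \emph{single} double ray with both tails in the one end, rather than degenerating into a double ray together with finite cycles, or into two disjoint double rays, and that $H' \neq H$, so that the finite differences do not all escape to infinity. Both the nested-chord phenomenon above and the two-ended double ladder show that this connectivity-at-the-end control fails once either the end-degree bound or the one-ended hypothesis is dropped. I therefore expect this to be the technical heart of the argument, whereas the reduction to, and application of, the finite theorems are comparatively routine.
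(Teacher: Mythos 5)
Your reduction---capture the end behind $3$-edge-cuts $F_n = E(S_n, V\setminus S_n)$, contract to finite Hamiltonian minors $G_n$, and invoke the finite theorems---matches the opening moves of the paper's proof of this theorem (split into Theorems~\ref{thm_1ended2deg} and \ref{thm_1ended3deg}). But the step you yourself flag as the ``technical heart'' is a genuine gap, and the compactness scheme you propose cannot close it. The problem is anchoring: K\"onig's lemma applied to arbitrary second Hamilton cycles $H_n'$ of $G_n$ may perfectly well return $H' = H$, since nothing prevents the differences $H_n \triangle H_n'$ from drifting to infinity with $n$. To anchor them you would need, for \emph{every} $n$, a Hamilton cycle of $G_n$ deviating from $H_n$ inside a fixed window, say using the edge $g_0 \notin E(C)$ of the first cut $F_0$; and neither Smith's theorem nor Thomason's parity applied to $G_n$ delivers this. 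It is consistent with Theorem~\ref{thm_finitecubicthomason} that every Hamilton cycle of $G_n$ crosses $F_0$ in $\Set{e_0,f_0}$ (the count through $g_0$ being zero is even), so the ``comparatively routine'' application of the finite theorems does not produce the cycles your limit needs. That second Hamilton cycles of the truncations are genuinely scarce is witnessed by the paper's own Example~\ref{exam33}: a one-ended cubic graph with end degree $3$ and \emph{exactly two} Hamilton cycles, so any argument extracting a second cycle by crude counting plus a subsequence is working at the absolute boundary. (Your connectivity worry, by contrast, is the benign one: a $2$-regular spanning limit with two or more double rays would cross the large $3$-cuts at least four times, and a finite cycle in the limit would be a proper subcycle of some finite Hamilton cycle $H_n'$.)

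The paper's mechanism for exactly this point is a dichotomy powered by a parity structure you do not have. Either some $G_n$ admits two distinct Hamilton cycles through the $C$-pair $\Set{e_n,f_n}$---then both glue directly onto $C \restriction (V \setminus S_n)$ and one is done with no limit at all---or, for every $n$, the restriction of $C$ is the \emph{unique} Hamilton cycle of $G_n$ through $\Set{e_n,f_n}$, and this uniqueness is itself converted into the missing extension property. The conversion runs through the Hamilton incidence multi-graph $H(G_{n,n+1}, v_n, w_n)$ of the finite ring between consecutive cuts (Definition~\ref{def_haminc}): Thomason's theorem applied to these rings yields the parity Lemmas~\ref{lem_parity} and \ref{lem_parity2}, the uniqueness assumption forces one incidence degree to equal $1$, hence \emph{all} incidence degrees are odd and in particular positive, and therefore every Hamilton cycle of $G_n$---whichever pair of $F_n$ it uses---extends across the ring to a Hamilton cycle of $G_{n+1}$. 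Starting from a second Hamilton cycle $A_0$ of $G_0$ through $\Set{e_0,g_0}$ (which exists by Thomason precisely because the one through $\Set{e_0,f_0}$ is unique) and extending recursively gives a nested sequence whose union is a second Hamilton cycle of $G$, distinguished from $C$ by $g_0$; no subsequence extraction occurs anywhere. In short: your proposal correctly isolates where the difficulty lies but supplies no mechanism for it, and the mechanism is not a refinement of compactness but the uniqueness-or-done dichotomy together with the incidence-multigraph parity argument. (A smaller repairable point: in the end-degree-$2$ case the contracted graph is only nearly cubic, and the paper uses the Entringer--Swart theorem rather than Smith; your suppression trick would additionally have to force the second cycle through the suppressed edge, e.g.\ again via Thomason.)
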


Our proof of Theorem~\ref{thm_mainoverview} combines the stronger of the finite results, namely Thomason's Theorem~\ref{thm_finitecubicthomason}, and a sequence of parity arguments. Interestingly, Thomason's Theorem~\ref{thm_finitecubicthomason} itself does \emph{not} extend to the above setting: we construct one-ended cubic graphs with end-degree $2$ or $3$ that have precisely two Hamilton cycles, see Examples~\ref{exam31} and \ref{exam33}.

Improving on Heuer's example, we also construct in Example~\ref{example2ended} a two-ended, uniquely Hamiltonian, cubic graph where both ends have degree 3. This shows that in general, it is only in the one-ended case where one could hope for an affirmative result about second Hamilton circles in cubic graphs. 

Finally, we remark that we do not know whether every Hamiltonian one-ended cubic graph with end-degree $4$ has a second Hamilton cycle---this seems to be the next crucial case in attacking Question~\ref{ques_1}.

\section{Two facts about end degrees}
\label{sec_2}

In our proofs below we need two facts about end degrees in locally finite graphs. Given a graph $G=(V,E)$ and a set of vertices $S \subset V$, we denote by $E(S,V \setminus S) \subset E$ the set of edges of $G$ with one endvertex in $S$ and the other in the complement of $S$. We also abbreviate $E(v) = E(\singleton{v}, V \setminus \singleton{v})$. 

Following \cite{euler}, for an end $\omega$ of some locally finite graph $G$ we take its \emph{degree} (to be precise: its \emph{edge-degree}) to be the maximum number of edge disjoint rays in $G$ leading to $\omega$, and its \emph{vertex-degree} to be the maximum number of vertex-disjoint rays in $G$ leading to $\omega$.

\begin{lemma}[{\cite[Lemma~10]{euler}}]
\label{lem_menger}
Let $\omega$ be an end of a locally finite graph $G$ and $S\subset V(G)$ a finite vertex set. Then the maximal number of edge-disjoint rays to $\omega$ starting in $S$ equals the minimum cardinality of an edge cut separating $S$ from $\omega$.
\end{lemma}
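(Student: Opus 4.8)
The plan is to prove the two inequalities separately: the trivial direction gives weak duality, and a compactness argument gives the reverse. Throughout I may assume $G$ is connected, since both rays to $\omega$ and edge cuts separating $S$ from $\omega$ live entirely in the component of $G$ containing $\omega$.

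For the easy direction I would show that the maximal number of edge-disjoint rays to $\omega$ starting in $S$ is at most the minimum size of a separating edge cut. Indeed, if $R_1,\dots,R_m$ are edge-disjoint rays from $S$ to $\omega$ and $F$ is any edge cut separating $S$ from $\omega$, then each $R_i$ must contain an edge of $F$ --- otherwise $R_i$ would be a ray from $S$ to $\omega$ in $G-F$, contradicting that $F$ separates. As the $R_i$ are edge-disjoint, this exhibits $m$ distinct edges of $F$, so $|F|\ge m$.

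For the reverse inequality, write $k$ for the minimum size of a separating edge cut (assume first that $k<\infty$; the case $k=\infty$ follows by running the argument for every finite value). I would approximate $G$ by finite graphs: fix an exhaustion $V_0\subseteq V_1\subseteq\cdots$ of $V(G)$ by finite vertex sets with $S\subseteq V_0$, and for each $n$ let $D_n$ be the unique component of $G-V_n$ in which $\omega$ lives. Since $V_n$ is finite and $G$ is locally finite, only finitely many edges leave $V_n$, so the boundary $E(V_n,D_n)$ is finite and only finitely many components of $G-V_n$ are infinite; contracting $D_n$ (and the other infinite components) to single vertices yields a finite multigraph $H_n$ with a distinguished vertex $t_n$ representing $\omega$. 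The crucial point is a correspondence between cuts: every $S$--$t_n$ edge cut of $H_n$ should pull back to an edge cut separating $S$ from $\omega$ in $G$, so the minimum $S$--$t_n$ cut in $H_n$ has size at least $k$. By the finite edge version of Menger's theorem, $H_n$ then contains $k$ edge-disjoint $S$--$t_n$ paths, which pull back to $k$ edge-disjoint paths in $G$ running from $S$ across $E(V_n,D_n)$ into $D_n$. Finally I would pass to the limit by König's Infinity Lemma: for each $n$ the restriction of such a path system to $G[V_n]$ is one of finitely many configurations of $k$ edge-disjoint paths from $S$ to distinct boundary vertices, and organising these nested partial systems into a tree ordered by restriction produces an infinite branch, i.e.\ a single coherent family of $k$ edge-disjoint rays all converging to $\omega$. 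Together with the easy direction this yields equality.

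The main obstacle I expect is making the two pull-back steps rigorous. On the cut side one must verify that a finite $S$--$t_n$ cut really separates $S$ from $\omega$ throughout $G$, which requires controlling rays that wander through the other (contracted) components before settling into $D_n$. Dually, one must arrange the finite path systems so that their limit consists of genuine rays to $\omega$ rather than paths that stall or escape to a different end, which is exactly where the consistency of the nested systems under restriction is delicate. Both difficulties are governed by local finiteness --- it keeps each boundary $E(V_n,D_n)$ finite and hence both the cut values and every level of the König tree finite --- but setting up the contractions and the compatibility of the path systems cleanly is the technical heart of the argument.
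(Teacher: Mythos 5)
You should first note that the paper contains no proof of this lemma at all: it is quoted verbatim from Bruhn--Stein (the reference \cite{euler}, Lemma~10), so the benchmark is the standard argument in the literature, whose skeleton --- trivial inequality, finite contraction minors, finite Menger, K\"onig's lemma --- your proposal reproduces. Your easy direction is correct. A small simplification: since $S$ is finite and $G$ is locally finite, $E(S,V\setminus S)$ is itself a finite cut separating $S$ from $\omega$, so the minimum $k$ is automatically finite and your $k=\infty$ caveat is vacuous. Also, the cut-side pull-back that you flag as delicate is in fact the routine half: any ray $R$ from $S$ to $\omega$ has a tail in $D_n$ (it meets the finite set $V_n$ only finitely often), and contracting each excursion of $R$ through a component of $G-V_n$ to the corresponding vertex of $H_n$ turns an initial segment of $R$ into an $S$--$t_n$ walk in $H_n$ using only edges of $H_n$; hence any $S$--$t_n$ cut of $H_n$ met by no such ray would be avoided by such a walk, a contradiction, and the minimum cut of $H_n$ is indeed at least $k$.

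The genuine gap is on the path side: the assertion that $k$ edge-disjoint $S$--$t_n$ paths in $H_n$ ``pull back to $k$ edge-disjoint paths in $G$'' is false as stated. A contracted vertex $c_C$ has unbounded throughput in $H_n$, but the component $C$ it replaces does not: if $C$ is, say, an induced path $u_1u_2u_3u_4\cdots$ whose attachment edges to $V_n$ meet $C$ in $u_1,u_2,u_3,u_4$, then two edge-disjoint $H_n$-paths through $c_C$ pairing $u_1$ with $u_3$ and $u_2$ with $u_4$ cannot be realised edge-disjointly inside $C$, since both realisations need the edge $u_2u_3$; and you cannot evade this by deleting the other components instead of contracting them, because that breaks your cut direction (rays may genuinely detour through them, as they must in graphs where the only routes to $\omega$ borrow capacity from a branch leading to another end). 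So an additional idea is required, and the proposal stalls exactly at the step it labels the technical heart --- moreover the difficulty it names there (limits stalling or escaping to another end) is not the one that bites. Two standard repairs: either run K\"onig's lemma at the level of the contracted graphs themselves, i.e.\ take the Menger systems in $H_m$, project them along the natural contractions $H_m \to H_n$ for $n \le m$, and let the exhaustion resolve each contracted vertex in the limit (a traversal of $c_C$ at level $n$ is, for large $m$, already an honest edge-disjoint subpath through $C\cap V_m$, so edge-disjointness is inherited levelwise and the limit paths consist of genuine $G$-edges forming rays with tails in every $D_n$, hence converging to $\omega$); or invoke the edge version of Menger's theorem for infinite locally finite graphs (Halin's note, which is even reference \cite{halin} of this paper) applied to $S$ and the vertex set of $D_n$, observing that every edge cut separating $S$ from $D_n$ in $G$ separates $S$ from $\omega$ and so has size at least $k$, and then running your K\"onig argument on the resulting path systems in $G$ directly.
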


\begin{lemma}
\label{lem_degreescoincide}
In cubic graphs, edge- and vertex-degree of ends coincide. 
\end{lemma}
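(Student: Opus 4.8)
The inequality $\mathrm{vdeg}(\omega) \le \mathrm{edeg}(\omega)$ holds trivially in any locally finite graph, since a system of vertex-disjoint rays is in particular edge-disjoint; this direction needs nothing beyond the definitions. So the plan is to establish the reverse inequality in the cubic case, and for this I would show that any $k$ edge-disjoint rays $R_1, \dots, R_k$ leading to $\omega$ can be converted into $k$ vertex-disjoint rays leading to $\omega$. Writing each $R_i = v_0^i v_1^i v_2^i \cdots$, I call $v_0^i$ its \emph{start} and the vertices $v_s^i$ with $s \ge 1$ its \emph{internal} vertices, at each of which $R_i$ uses two edges.

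The one observation that drives the whole argument is that cubicity forbids two edge-disjoint rays from sharing an internal vertex: if a vertex $w$ were internal to both $R_i$ and $R_j$, then each would use two edges at $w$, and edge-disjointness would force four distinct edges at $w$, contradicting $\deg_G(w) = 3$. Consequently, any vertex lying on two of the rays must be the start of at least one of them, so the set $W$ of vertices lying on more than one of the $R_i$ satisfies $W \subseteq \{v_0^1, \dots, v_0^k\}$ and is in particular finite.

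Finiteness of $W$ lets me finish by truncation. For each $i$ let $t_i$ be the largest index with $v_{t_i}^i \in W$ if such an index exists, and let $R_i'$ be the tail $v_{t_i+1}^i v_{t_i+2}^i \cdots$; if $R_i$ meets $W$ nowhere, put $R_i' = R_i$. Deleting a finite initial segment leaves a ray leading to the same end $\omega$, and by the choice of $t_i$ no $R_i'$ meets $W$, so a vertex shared by two of these tails would lie in $W$ and is thus impossible. Hence $R_1', \dots, R_k'$ are $k$ pairwise vertex-disjoint rays to $\omega$, which gives $\mathrm{edeg}(\omega) \le \mathrm{vdeg}(\omega)$ and, together with the trivial direction, equality. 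If $\mathrm{edeg}(\omega)$ is infinite one simply runs this for every finite $k$, producing arbitrarily many vertex-disjoint rays and hence $\mathrm{vdeg}(\omega) = \infty$ as well.

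I do not expect a genuinely hard step here once the internal-vertex observation is in place; the only points that deserve care are confirming that the shared-vertex set is confined to the finitely many start vertices (this is exactly where cubicity enters, via the fact that two internal rays would need four edges at a degree-three vertex) and noting that truncating each ray by a finite amount preserves convergence to $\omega$. It is worth remarking that, in contrast to Lemma~\ref{lem_menger}, this argument uses no Menger-type duality and no limiting or compactness argument at all.
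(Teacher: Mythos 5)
Your proposal is correct and follows essentially the same route as the paper: the key point in both is that cubicity forbids a vertex from being internal to two edge-disjoint rays, after which one discards finite initial segments to obtain vertex-disjoint rays to the same end. The only cosmetic difference is that you truncate each ray past its last visit to the (finite) set of shared start vertices, whereas the paper observes that deleting just the single initial vertex of each ray already suffices, and phrases the argument for a family of arbitrary cardinality at once.
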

\begin{proof}
In any locally finite graph, the vertex-degree of a given end is at most its edge-degree. Conversely, any family $\set{R_i}:{i \in I}$ of edge disjoint rays in a cubic graph have to be internally vertex-disjoint, as otherwise there would be a vertex of degree $\geq 4$. Thus, if $R'_i$ denotes the ray $R_i$ minus its initial vertex, then $\set{R'_i}:{i \in I}$ is a family of vertex-disjoint rays of the same cardinality as our initial family.
\end{proof}

\section{Affirmative results for second Hamilton cycles}

In this section, we present our positive results about the existence of additional Hamilton cycles in one-ended cubic graph with end-degree $2$ or $3$.

\begin{theorem}
\label{thm_1ended2deg}
Every Hamiltonian one-ended cubic graph with end-degree 2 has at least two Hamilton cycles.
\end{theorem}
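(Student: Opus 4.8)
The plan is to reduce to Thomason's Theorem~\ref{thm_finitecubicthomason} by cutting $G$ along a finite $2$-edge-cut that separates off the end, and then to reattach a modified finite piece. Write $\omega$ for the unique end of $G$; by hypothesis and Lemma~\ref{lem_degreescoincide} it has edge- and vertex-degree $2$. Fix a Hamilton cycle, i.e.\ a spanning double ray $D$ whose two tails both converge to $\omega$. First I would produce a finite $2$-edge-cut tending to $\omega$: choosing a finite vertex set $S$ that contains the initial vertices of two edge-disjoint rays to $\omega$ and then enlarging it, Lemma~\ref{lem_menger} shows that the minimum cardinality of an edge cut separating $S$ from $\omega$ equals $2$ (it is at most $2$ since the end-degree is $2$, and at least $2$ because the two rays start in $S$). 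This yields a finite set $T \supseteq S$ with $E(T, V\setminus T) = \{e, f\}$, where $V \setminus T$ is the (infinite) side containing $\omega$; and since such $S$ can be taken arbitrarily large, I may assume $|T| \geq 3$.

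Next I analyse how $D$ meets $T$. Traversing the bi-infinite path $D$, both tails lie on the $\omega$-side $V \setminus T$, so every maximal subpath of $D$ inside $T$ is entered and left through a cut edge; as $D$ uses each of the two edges $e,f$ at most once, there can be only one such subpath, and it uses both $e$ and $f$. Hence $P := D \cap T$ is a single path that covers all of $T$, with endpoints the two vertices $u,v \in T$ incident with $e$ and $f$ respectively; in particular $G[T]$ is connected and, as $|T| \geq 3$, $u \neq v$. Writing $e = ux$ and $f = vy$ with $x,y \in V\setminus T$, deleting $P$ and the edges $e,f$ from $D$ leaves exactly two disjoint rays $D_1, D_2$ starting at $x,y$ that together span $V \setminus T$ and converge to $\omega$.

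Now form the finite cubic multigraph $G' := G[T] + uv$ obtained by adding an edge between $u$ and $v$. Each vertex of $T$ other than $u, v$ keeps its three edges inside $T$, while $u$ and $v$ recover degree $3$ through the new edge, so $G'$ is cubic (with a parallel edge if $uv$ was already present). Since $|T| \geq 3$, the Hamilton path $P$ does not use the edge $uv$, so $P + uv$ is a Hamilton cycle of $G'$ through the new edge. By Thomason's Theorem~\ref{thm_finitecubicthomason}, applied to the odd-regular multigraph $G'$, the edge $uv$ lies on an \emph{even} number of Hamilton cycles; as it lies on at least one, it lies on at least two. Pick a second Hamilton cycle $C'$ of $G'$ through $uv$; then $P' := C' - uv$ is a Hamilton path of $G[T]$ from $u$ to $v$ with $P' \neq P$.

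Finally I reattach: set $D' := D_1 \cup \{e\} \cup P' \cup \{f\} \cup D_2$. Because $P'$ spans $T$ with endpoints $u,v$ and $e = ux$, $f = vy$ meet the starting vertices of $D_1, D_2$, this $D'$ is again a spanning double ray whose two tails are $D_1, D_2$, hence still converge to $\omega$; so $D'$ is a Hamilton cycle of $G$. As $D'$ and $D$ agree outside $T$ but restrict to the distinct paths $P', P$ on $T$, we get $D' \neq D$, giving the desired second Hamilton cycle. The one genuinely delicate step is the first, extracting a finite $2$-edge-cut that isolates the end from Lemma~\ref{lem_menger} together with the end-degree hypothesis; the crossing/parity argument pinning down $D \cap T$ and the verification that the reattached object is a bona fide double ray (both crucially using that the cut has size exactly $2$) are the conceptual heart, while the degeneracy checks---that $u \neq v$ and that $uv$ is a genuinely usable edge---are handled simply by taking $|T|$ large.
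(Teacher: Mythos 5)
Your proof is correct and follows the same overall cut-and-paste strategy as the paper---isolate a finite $2$-edge-cut $\{e,f\}$ via Lemma~\ref{lem_menger}, observe that the Hamilton double ray restricts to a Hamilton path $P$ of the finite side from $u$ to $v$, produce a second such path, and reglue---but it uses a different finite black box. The paper contracts the infinite side to a single dummy vertex of degree $2$, obtaining a \emph{nearly cubic} simple graph, and invokes the theorem of Entringer and Swart (\cite[Theorem~1]{entringer}) that every Hamiltonian nearly cubic graph has a second Hamilton cycle; you instead suppress that degree-$2$ vertex, i.e.\ add the edge $uv$ to $G[T]$, obtaining a cubic multigraph, and invoke Thomason's parity Theorem~\ref{thm_finitecubicthomason} for the edge $uv$. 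The two constructions are equivalent (the paper's $\hat{G}$ with its dummy vertex suppressed is exactly your $G'$), and your route buys self-containedness, needing only the theorem already quoted in the introduction rather than an additional reference. The one point to be careful about is the case $uv \in E(G[T])$: then $G'$ has a parallel pair, while Theorem~\ref{thm_finitecubicthomason} as stated applies to simple graphs (the paper explicitly reserves ``graph'' for simple graphs, and in the proof of Theorem~\ref{thm_1ended3deg} it takes pains to pass to a subsequence making its minors simple before applying finite results). Thomason's lollipop argument does go through verbatim for loopless multigraphs, so this is a remark rather than a gap, but you should either state that multigraph version explicitly or keep the degree-$2$ dummy vertex and quote Entringer--Swart, which is precisely the device by which the paper sidesteps the multigraph issue. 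On the other side of the ledger, your handling of the degeneracies (taking $|T|\geq 3$ so that $u \neq v$, and noting $x \neq y$ automatically since $D$ is a path) is more explicit than the paper's, which leaves the analogous check---that the two cut edges have distinct endvertices in $S$, so that $\hat{G}$ is a simple nearly cubic graph---implicit.
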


\begin{proof}
Let $C$ be a Hamilton cycle of $G$. Since the end of $G$ has degree $2$, by Lemma~\ref{lem_menger} there is a finite vertex set $S \subset V$ with $|E(S,V \setminus S)| = 2$. 

Consider the minor $\hat{G}$ of $G$ where we contract $V \setminus S$ to a single `dummy' vertex. Then $C \restriction \hat{G}$ witnesses that $\hat{G}$ is a finite Hamiltonian graph. Moreover, $\hat{G}$ is \emph{nearly-cubic}, that is all vertices of $\hat{G}$ have degree $3$, with the exception of the dummy vertex, which has degree $2$. By \cite[Theorem~1]{entringer}, every nearly cubic Hamiltonian graph has a second Hamilton cycle. By combining the two Hamilton cycles of $\hat{G}$ with $C \setminus E(\hat{G})$, we have found two distinct Hamilton cycles of $G$.
\end{proof}

For the end-degree $3$ case, we employ the following auxiliary multi-graph which encodes how Hamilton cycles choose incident edges of certain vertices of a graph. 

\begin{defn}[Hamilton incidence multi-graph]
\label{def_haminc}
Let $v$ and $w$ be distinct vertices of a  Hamiltonian graph $G$. The \emph{Hamilton incidence multi-graph $H=H(G,v,w)$ of $G$ with respect to $v$ and $w$} is the bipartite multigraph with bipartition 
$$V(H) = [E(v)]^2 \sqcup [E(w)]^2$$
where the multiplicity of an edge $pq \in E(H)$ corresponds to the number of Hamilton cycles $D$ of $G$ with $p \cup q \subset D.$  
\end{defn}

As a concrete example of a Hamilton incidence multi-graph (which we shall meet again in Section~\ref{sec_examples} below), consider the Tutte fragment $T$ (invented by Tutte in \cite{tutte}) with leaves $\ell_x$, $\ell_y$ and $\ell_z$ as depicted in Figure~\ref{TutteFragment}. 
\begin{figure}[ht!]
\centering
\begin{tikzpicture}[scale=.6]

\node[draw, circle,scale=.4, fill] (blob1) at (0,0) {};
\node[draw, circle,scale=.4, fill] (blob2) at (0,1) {};
\node[draw, circle,scale=.4, fill] (blob3) at (3,1) {};
\node[draw, circle,scale=.4, fill] (blob4) at (5,1) {};
\node[draw, circle,scale=.4, fill] (blob5) at (5,0) {};

\draw (0,-.5) node {$\ell_x$};
\draw (0.3,.5) node {$e_x$};
\draw (-.5,1) node {$x$};

\draw (5,-.5) node {$\ell_y$};
\draw (4.7,.5) node {$e_y$};
\draw (5.5,1) node {$y$};

\draw (blob1) -- (blob2);
\draw (blob2) -- (blob3);
\draw (blob3) -- (blob4);
\draw (blob4) -- (blob5);

\node[draw, circle,scale=.4, fill] (blob6) at (0,2.1) {};
\node[draw, circle,scale=.4, fill] (blob7) at (2,2.1) {};
\node[draw, circle,scale=.4, fill] (blob8) at (3,2.1) {};

\draw (blob2) -- (blob6);
\draw (blob6) -- (blob7);
\draw (blob7) -- (blob8);
\draw (blob8) -- (blob3);

\node[draw, circle,scale=.4, fill] (blob9) at (0,3.2) {};
\node[draw, circle,scale=.4, fill] (blob10) at (1.4,3.2) {};
\node[draw, circle,scale=.4, fill] (blob11) at (3.6,3.2) {};
\node[draw, circle,scale=.4, fill] (blob12) at (5,3.2) {};

\draw (3.8,3.5) node {$v$};
\draw (2.8,4.1) node {$a$};
\draw (3,2.5) node {$b$};
\draw (5.5,3.2) node {$c$};

\draw (2.9,3.3) node {$f_a$};
\draw (3.55,2.6) node {$f_b$};
\draw (4.4,2.9) node {$f_c$};

\draw (blob6) -- (blob9);
\draw (blob9) -- (blob10);
\draw (blob10) -- (blob7);
\draw (blob8) -- (blob11);
\draw (blob11) -- (blob12);
\draw (blob4) -- (blob12);

\node[draw, circle,scale=.4, fill] (blob13) at (2.5,4) {};

\draw (blob10) -- (blob13);
\draw (blob11) -- (blob13);

\node[draw, circle,scale=.4, fill] (blob14) at (0,5) {};
\node[draw, circle,scale=.4, fill] (blob15) at (2.5,5) {};
\node[draw, circle,scale=.4, fill] (blob16) at (5,5) {};

\draw (blob6) -- (blob14);
\draw (blob14) -- (blob15);
\draw (blob15) -- (blob16);
\draw (blob12) -- (blob16);
\draw (blob13) -- (blob15);

\node[draw, circle,scale=.4, fill] (blob17) at (2.5,6) {};

\draw (blob14) -- (blob17);
\draw (blob16) -- (blob17);

\node[draw, circle,scale=.4, fill] (blob18) at (2.5,7) {};

\draw (blob17) -- (blob18);

\draw (2.5,7.5) node {$\ell_z$};
\draw (2.8,6.5) node {$e_z$};
\draw (2.5,5.7) node {$z$};

\end{tikzpicture}
\caption{The Tutte fragment $T$.}
\label{TutteFragment}
\end{figure}
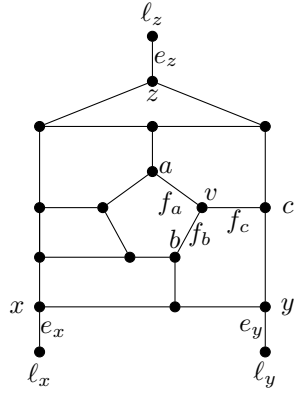

Let $T' = T / \Set{\ell_x=\ell_y=\ell_z}$ be the graph obtained from $T$ by identifying its three leaves. Then $T'$ is a cubic graph with precisely $6$ Hamilton cycles (see \cite{chia,karl,tutte}), which we may visualise as follows:

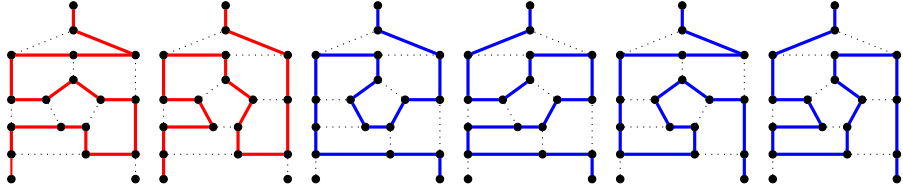
\begin{figure}[ht!]
\begin{subfigure}[t]{0.16\textwidth}
\centering
\begin{tikzpicture}[scale=.33]

\node[draw, circle,scale=.3, fill] (blob1) at (0,0) {};
\node[draw, circle,scale=.3, fill] (blob2) at (0,1) {};
\node[draw, circle,scale=.3, fill] (blob3) at (3,1) {};
\node[draw, circle,scale=.3, fill] (blob4) at (5,1) {};
\node[draw, circle,scale=.3, fill] (blob5) at (5,0) {};

\draw[red, thick] (blob1) -- (blob2);
\draw[dotted] (blob2) -- (blob3);
\draw[red, very thick] (blob3) -- (blob4);
\draw[dotted] (blob4) -- (blob5);

\node[draw, circle,scale=.3, fill] (blob6) at (0,2.1) {};
\node[draw, circle,scale=.3, fill] (blob7) at (2,2.1) {};
\node[draw, circle,scale=.3, fill] (blob8) at (3,2.1) {};

\draw[red, very thick] (blob2) -- (blob6);
\draw[red, very thick] (blob6) -- (blob7);
\draw[red, very thick] (blob7) -- (blob8);
\draw[red, very thick] (blob8) -- (blob3);

\node[draw, circle,scale=.3, fill] (blob9) at (0,3.2) {};
\node[draw, circle,scale=.3, fill] (blob10) at (1.4,3.2) {};
\node[draw, circle,scale=.3, fill] (blob11) at (3.6,3.2) {};
\node[draw, circle,scale=.3, fill] (blob12) at (5,3.2) {};

\draw[dotted] (blob6) -- (blob9);
\draw[red, very thick] (blob9) -- (blob10);
\draw[dotted] (blob10) -- (blob7);
\draw[dotted] (blob8) -- (blob11);
\draw[red, very thick] (blob11) -- (blob12);
\draw[red, very thick] (blob4) -- (blob12);

\node[draw, circle,scale=.3, fill] (blob13) at (2.5,4) {};

\draw[red, very thick] (blob10) -- (blob13);
\draw[red, very thick] (blob11) -- (blob13);

\node[draw, circle,scale=.3, fill] (blob14) at (0,5) {};
\node[draw, circle,scale=.3, fill] (blob15) at (2.5,5) {};
\node[draw, circle,scale=.3, fill] (blob16) at (5,5) {};

\draw[red, very thick] (blob9) -- (blob14);
\draw[red, very thick] (blob14) -- (blob15);
\draw[red, very thick] (blob15) -- (blob16);
\draw[dotted] (blob12) -- (blob16);
\draw[dotted] (blob13) -- (blob15);

\node[draw, circle,scale=.3, fill] (blob17) at (2.5,6) {};

\draw[dotted] (blob14) -- (blob17);
\draw[red, very thick] (blob16) -- (blob17);

\node[draw, circle,scale=.3, fill] (blob18) at (2.5,7) {};

\draw[red, very thick] (blob17) -- (blob18);

\end{tikzpicture}
\end{subfigure}%
\begin{subfigure}[t]{0.16\textwidth}
\centering
\begin{tikzpicture}[scale=.33]

\node[draw, circle,scale=.3, fill] (blob1) at (0,0) {};
\node[draw, circle,scale=.3, fill] (blob2) at (0,1) {};
\node[draw, circle,scale=.3, fill] (blob3) at (3,1) {};
\node[draw, circle,scale=.3, fill] (blob4) at (5,1) {};
\node[draw, circle,scale=.3, fill] (blob5) at (5,0) {};

\draw[red, very thick] (blob1) -- (blob2);
\draw[dotted] (blob2) -- (blob3);
\draw[red, very thick] (blob3) -- (blob4);
\draw[dotted] (blob4) -- (blob5);

\node[draw, circle,scale=.3, fill] (blob6) at (0,2.1) {};
\node[draw, circle,scale=.3, fill] (blob7) at (2,2.1) {};
\node[draw, circle,scale=.3, fill] (blob8) at (3,2.1) {};

\draw[red, very thick] (blob2) -- (blob6);
\draw[red, very thick] (blob6) -- (blob7);
\draw[dotted] (blob7) -- (blob8);
\draw[red, very thick] (blob8) -- (blob3);

\node[draw, circle,scale=.3, fill] (blob9) at (0,3.2) {};
\node[draw, circle,scale=.3, fill] (blob10) at (1.4,3.2) {};
\node[draw, circle,scale=.3, fill] (blob11) at (3.6,3.2) {};
\node[draw, circle,scale=.3, fill] (blob12) at (5,3.2) {};

\draw[dotted] (blob6) -- (blob9);
\draw[red, very thick] (blob9) -- (blob10);
\draw[red, very thick] (blob10) -- (blob7);
\draw[red, very thick] (blob8) -- (blob11);
\draw[dotted] (blob11) -- (blob12);
\draw[red, very thick] (blob4) -- (blob12);

\node[draw, circle,scale=.3, fill] (blob13) at (2.5,4) {};

\draw[dotted] (blob10) -- (blob13);
\draw[red, very thick] (blob11) -- (blob13);

\node[draw, circle,scale=.3, fill] (blob14) at (0,5) {};
\node[draw, circle,scale=.3, fill] (blob15) at (2.5,5) {};
\node[draw, circle,scale=.3, fill] (blob16) at (5,5) {};

\draw[red, very thick] (blob9) -- (blob14);
\draw[red, very thick] (blob14) -- (blob15);
\draw[dotted] (blob15) -- (blob16);
\draw[red, very thick] (blob12) -- (blob16);
\draw[red, very thick] (blob13) -- (blob15);

\node[draw, circle,scale=.3, fill] (blob17) at (2.5,6) {};

\draw[dotted] (blob14) -- (blob17);
\draw[red, very thick] (blob16) -- (blob17);

\node[draw, circle,scale=.3, fill] (blob18) at (2.5,7) {};

\draw[red, very thick] (blob17) -- (blob18);

\end{tikzpicture}
\end{subfigure}%
\begin{subfigure}[t]{0.16\textwidth}
\centering
\begin{tikzpicture}[scale=.33]

\node[draw, circle,scale=.3, fill] (blob1) at (0,0) {};
\node[draw, circle,scale=.3, fill] (blob2) at (0,1) {};
\node[draw, circle,scale=.3, fill] (blob3) at (3,1) {};
\node[draw, circle,scale=.3, fill] (blob4) at (5,1) {};
\node[draw, circle,scale=.3, fill] (blob5) at (5,0) {};

\draw[dotted] (blob1) -- (blob2);
\draw[blue, very thick] (blob2) -- (blob3);
\draw[blue, very thick] (blob3) -- (blob4);
\draw[blue, very thick] (blob4) -- (blob5);

\node[draw, circle,scale=.3, fill] (blob6) at (0,2.1) {};
\node[draw, circle,scale=.3, fill] (blob7) at (2,2.1) {};
\node[draw, circle,scale=.3, fill] (blob8) at (3,2.1) {};

\draw[blue, very thick] (blob2) -- (blob6);
\draw[dotted] (blob6) -- (blob7);
\draw[blue, very thick] (blob7) -- (blob8);
\draw[dotted] (blob8) -- (blob3);

\node[draw, circle,scale=.3, fill] (blob9) at (0,3.2) {};
\node[draw, circle,scale=.3, fill] (blob10) at (1.4,3.2) {};
\node[draw, circle,scale=.3, fill] (blob11) at (3.6,3.2) {};
\node[draw, circle,scale=.3, fill] (blob12) at (5,3.2) {};

\draw[blue, very thick] (blob6) -- (blob9);
\draw[dotted] (blob9) -- (blob10);
\draw[blue, very thick] (blob10) -- (blob7);
\draw[blue, very thick] (blob8) -- (blob11);
\draw[blue, very thick] (blob11) -- (blob12);
\draw[dotted] (blob4) -- (blob12);

\node[draw, circle,scale=.3, fill] (blob13) at (2.5,4) {};

\draw[blue, very thick] (blob10) -- (blob13);
\draw[dotted] (blob11) -- (blob13);

\node[draw, circle,scale=.3, fill] (blob14) at (0,5) {};
\node[draw, circle,scale=.3, fill] (blob15) at (2.5,5) {};
\node[draw, circle,scale=.3, fill] (blob16) at (5,5) {};

\draw[blue, very thick] (blob9) -- (blob14);
\draw[blue, very thick] (blob14) -- (blob15);
\draw[dotted] (blob15) -- (blob16);
\draw[blue, very thick] (blob12) -- (blob16);
\draw[blue, very thick] (blob13) -- (blob15);

\node[draw, circle,scale=.3, fill] (blob17) at (2.5,6) {};

\draw[dotted] (blob14) -- (blob17);
\draw[blue, very thick] (blob16) -- (blob17);

\node[draw, circle,scale=.3, fill] (blob18) at (2.5,7) {};

\draw[blue, very thick] (blob17) -- (blob18);

\end{tikzpicture}
\end{subfigure}%
\begin{subfigure}[t]{0.16\textwidth}
\centering
\begin{tikzpicture}[scale=.33]

\node[draw, circle,scale=.3, fill] (blob1) at (0,0) {};
\node[draw, circle,scale=.3, fill] (blob2) at (0,1) {};
\node[draw, circle,scale=.3, fill] (blob3) at (3,1) {};
\node[draw, circle,scale=.3, fill] (blob4) at (5,1) {};
\node[draw, circle,scale=.3, fill] (blob5) at (5,0) {};

\draw[dotted] (blob1) -- (blob2);
\draw[blue, very thick] (blob2) -- (blob3);
\draw[blue, very thick] (blob3) -- (blob4);
\draw[blue, very thick] (blob4) -- (blob5);

\node[draw, circle,scale=.3, fill] (blob6) at (0,2.1) {};
\node[draw, circle,scale=.3, fill] (blob7) at (2,2.1) {};
\node[draw, circle,scale=.3, fill] (blob8) at (3,2.1) {};

\draw[blue, very thick] (blob2) -- (blob6);
\draw[blue, very thick] (blob6) -- (blob7);
\draw[blue, very thick] (blob7) -- (blob8);
\draw[dotted] (blob8) -- (blob3);

\node[draw, circle,scale=.3, fill] (blob9) at (0,3.2) {};
\node[draw, circle,scale=.3, fill] (blob10) at (1.4,3.2) {};
\node[draw, circle,scale=.3, fill] (blob11) at (3.6,3.2) {};
\node[draw, circle,scale=.3, fill] (blob12) at (5,3.2) {};

\draw[dotted] (blob6) -- (blob9);
\draw[blue, very thick] (blob9) -- (blob10);
\draw[dotted] (blob10) -- (blob7);
\draw[blue, very thick] (blob8) -- (blob11);
\draw[blue, very thick] (blob11) -- (blob12);
\draw[dotted] (blob4) -- (blob12);

\node[draw, circle,scale=.3, fill] (blob13) at (2.5,4) {};

\draw[blue, very thick] (blob10) -- (blob13);
\draw[dotted] (blob11) -- (blob13);

\node[draw, circle,scale=.3, fill] (blob14) at (0,5) {};
\node[draw, circle,scale=.3, fill] (blob15) at (2.5,5) {};
\node[draw, circle,scale=.3, fill] (blob16) at (5,5) {};

\draw[blue, very thick] (blob9) -- (blob14);
\draw[dotted] (blob14) -- (blob15);
\draw[blue, very thick] (blob15) -- (blob16);
\draw[blue, very thick] (blob12) -- (blob16);
\draw[blue, very thick] (blob13) -- (blob15);

\node[draw, circle,scale=.3, fill] (blob17) at (2.5,6) {};

\draw[blue, very thick] (blob14) -- (blob17);
\draw[dotted] (blob16) -- (blob17);

\node[draw, circle,scale=.3, fill] (blob18) at (2.5,7) {};

\draw[blue, very thick] (blob17) -- (blob18);

\end{tikzpicture}
\end{subfigure}%
\begin{subfigure}[t]{0.16\textwidth}
\centering
\begin{tikzpicture}[scale=.33]

\node[draw, circle,scale=.3, fill] (blob1) at (0,0) {};
\node[draw, circle,scale=.3, fill] (blob2) at (0,1) {};
\node[draw, circle,scale=.3, fill] (blob3) at (3,1) {};
\node[draw, circle,scale=.3, fill] (blob4) at (5,1) {};
\node[draw, circle,scale=.3, fill] (blob5) at (5,0) {};

\draw[dotted] (blob1) -- (blob2);
\draw[blue, very thick] (blob2) -- (blob3);
\draw[dotted] (blob3) -- (blob4);
\draw[blue, very thick] (blob4) -- (blob5);

\node[draw, circle,scale=.3, fill] (blob6) at (0,2.1) {};
\node[draw, circle,scale=.3, fill] (blob7) at (2,2.1) {};
\node[draw, circle,scale=.3, fill] (blob8) at (3,2.1) {};

\draw[blue, very thick] (blob2) -- (blob6);
\draw[dotted] (blob6) -- (blob7);
\draw[blue, very thick] (blob7) -- (blob8);
\draw[blue, very thick] (blob8) -- (blob3);

\node[draw, circle,scale=.3, fill] (blob9) at (0,3.2) {};
\node[draw, circle,scale=.3, fill] (blob10) at (1.4,3.2) {};
\node[draw, circle,scale=.3, fill] (blob11) at (3.6,3.2) {};
\node[draw, circle,scale=.3, fill] (blob12) at (5,3.2) {};

\draw[blue, very thick] (blob6) -- (blob9);
\draw[dotted] (blob9) -- (blob10);
\draw[blue, very thick] (blob10) -- (blob7);
\draw[dotted] (blob8) -- (blob11);
\draw[blue, very thick] (blob11) -- (blob12);
\draw[blue, very thick] (blob4) -- (blob12);

\node[draw, circle,scale=.3, fill] (blob13) at (2.5,4) {};

\draw[blue, very thick] (blob10) -- (blob13);
\draw[blue, very thick] (blob11) -- (blob13);

\node[draw, circle,scale=.3, fill] (blob14) at (0,5) {};
\node[draw, circle,scale=.3, fill] (blob15) at (2.5,5) {};
\node[draw, circle,scale=.3, fill] (blob16) at (5,5) {};

\draw[blue, very thick] (blob9) -- (blob14);
\draw[blue, very thick] (blob14) -- (blob15);
\draw[blue, very thick] (blob15) -- (blob16);
\draw[dotted] (blob12) -- (blob16);
\draw[dotted] (blob13) -- (blob15);

\node[draw, circle,scale=.3, fill] (blob17) at (2.5,6) {};

\draw[dotted] (blob14) -- (blob17);
\draw[blue, very thick] (blob16) -- (blob17);

\node[draw, circle,scale=.3, fill] (blob18) at (2.5,7) {};

\draw[blue, very thick] (blob17) -- (blob18);

\end{tikzpicture}
\end{subfigure}%
\begin{subfigure}[t]{0.16\textwidth}
\centering
\begin{tikzpicture}[scale=.33]

\node[draw, circle,scale=.3, fill] (blob1) at (0,0) {};
\node[draw, circle,scale=.3, fill] (blob2) at (0,1) {};
\node[draw, circle,scale=.3, fill] (blob3) at (3,1) {};
\node[draw, circle,scale=.3, fill] (blob4) at (5,1) {};
\node[draw, circle,scale=.3, fill] (blob5) at (5,0) {};

\draw[dotted] (blob1) -- (blob2);
\draw[blue, very thick] (blob2) -- (blob3);
\draw[dotted] (blob3) -- (blob4);
\draw[blue, very thick] (blob4) -- (blob5);

\node[draw, circle,scale=.3, fill] (blob6) at (0,2.1) {};
\node[draw, circle,scale=.3, fill] (blob7) at (2,2.1) {};
\node[draw, circle,scale=.3, fill] (blob8) at (3,2.1) {};

\draw[blue, very thick] (blob2) -- (blob6);
\draw[blue, very thick] (blob6) -- (blob7);
\draw[dotted] (blob7) -- (blob8);
\draw[blue, very thick] (blob8) -- (blob3);

\node[draw, circle,scale=.3, fill] (blob9) at (0,3.2) {};
\node[draw, circle,scale=.3, fill] (blob10) at (1.4,3.2) {};
\node[draw, circle,scale=.3, fill] (blob11) at (3.6,3.2) {};
\node[draw, circle,scale=.3, fill] (blob12) at (5,3.2) {};

\draw[dotted] (blob6) -- (blob9);
\draw[blue, very thick] (blob9) -- (blob10);
\draw[blue, very thick] (blob10) -- (blob7);
\draw[blue, very thick] (blob8) -- (blob11);
\draw[dotted] (blob11) -- (blob12);
\draw[blue, very thick] (blob4) -- (blob12);

\node[draw, circle,scale=.3, fill] (blob13) at (2.5,4) {};

\draw[dotted] (blob10) -- (blob13);
\draw[blue, very thick] (blob11) -- (blob13);

\node[draw, circle,scale=.3, fill] (blob14) at (0,5) {};
\node[draw, circle,scale=.3, fill] (blob15) at (2.5,5) {};
\node[draw, circle,scale=.3, fill] (blob16) at (5,5) {};

\draw[blue, very thick] (blob9) -- (blob14);
\draw[dotted] (blob14) -- (blob15);
\draw[blue, very thick] (blob15) -- (blob16);
\draw[blue, very thick] (blob12) -- (blob16);
\draw[blue, very thick]  (blob13) -- (blob15);

\node[draw, circle,scale=.3, fill] (blob17) at (2.5,6) {};

\draw[blue, very thick] (blob14) -- (blob17);
\draw[dotted] (blob16) -- (blob17);

\node[draw, circle,scale=.3, fill] (blob18) at (2.5,7) {};

\draw[blue, very thick] (blob17) -- (blob18);

\end{tikzpicture}
\end{subfigure}

\caption{The six Hamilton cycles of $T'$.}
\label{TutteFragmentHamilton}
\end{figure}

The first two Hamilton cycles use the edge pair $e_x=\ell_x x$ and $e_z = \ell_z z$, and the other four Hamilton cycles use the edge pair $e_y=\ell_y y $ and $e_z$. In particular, there are no Hamilton cycles of $T'$ using the edge pair $\Set{e_x,e_y}$. Writing $w$ for the contracted vertex $\Set{\ell_x=\ell_y=\ell_z}$ in $T'$, and letting $v$ and its incident edges $f_a,f_b$ and $f_c$ be as indicated in Figure~\ref{TutteFragment}, we see that the Hamilton incidence graph $H=H(T',w,v)$ as in Definition~\ref{def_haminc} is given by the multigraph in Figure~\ref{TutteIncidence}.
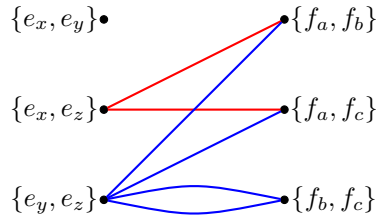
\begin{figure}[ht!]
\centering
\begin{tikzpicture}[scale=.6]

\node[draw, circle,scale=.3, fill] (blobxy) at (0,4) {};
\node[draw, circle,scale=.3, fill] (blobxz) at (0,2) {};
\node[draw, circle,scale=.3, fill] (blobyz) at (0,0) {};

\draw (-1.1,4) node {$\Set{e_x,e_y}$};
\draw (-1.1,2) node {$\Set{e_x,e_z}$};
\draw (-1.1,0) node {$\Set{e_y,e_z}$};

\node[draw, circle,scale=.3, fill] (blobab) at (4,4) {};
\node[draw, circle,scale=.3, fill] (blobac) at (4,2) {};
\node[draw, circle,scale=.3, fill] (blobbc) at (4,0) {};

\draw (5.1,4) node {$\Set{f_a,f_b}$};
\draw (5.1,2) node {$\Set{f_a,f_c}$};
\draw (5.1,0) node {$\Set{f_b,f_c}$};

\draw[red, thick] (blobxz) -- (blobab);
\draw[red, thick] (blobxz) -- (blobac);

\draw[blue, thick] (blobyz) -- (blobab);
\draw[blue, thick] (blobyz) -- (blobac);
\draw[blue, thick] (blobyz) .. controls (2,.4) .. (blobbc);
\draw[blue, thick] (blobyz) .. controls (2,-.4) .. (blobbc);

\end{tikzpicture}

\caption{The Hamilton incidence multi-graph $H(T',w,v)$.}
\label{TutteIncidence}
\end{figure}

Note that all vertices of our example $H(T',w,v)$ have even degree. In the following two lemmas, we show that this parity condition holds in general.

\begin{lemma}
\label{lem_parity}
Let $v$ and $w$ be distinct vertices of a finite cubic graph $G$. Then the sum of the degrees of any pair of vertices in the Hamilton incidence multi-graph $H(G,v,w)$ from the same side of its vertex bipartition is always even.
\end{lemma}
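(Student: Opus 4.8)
The plan is to reduce this parity statement to Thomason's Theorem~\ref{thm_finitecubicthomason} by first unwinding what the degree of a vertex of $H(G,v,w)$ actually counts. Since $G$ is cubic, the edge set $E(v)$ has exactly three elements, say $E(v) = \Set{a,b,c}$, so the $v$-side of the bipartition $[E(v)]^2$ consists of precisely the three pairs $\Set{a,b}$, $\Set{a,c}$, $\Set{b,c}$. First I would record the key bookkeeping identity: for a pair $p \in [E(v)]^2$, the degree $\deg_H(p) = \sum_{q \in [E(w)]^2} \operatorname{mult}(pq)$ counts, with the right multiplicities, exactly the Hamilton cycles $D$ of $G$ containing both edges of $p$. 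The point is that any Hamilton cycle $D$ with $p \subset D$ uses precisely \emph{one} pair $q$ of edges at $w$ (a Hamilton cycle of a cubic graph selects two of the three edges at each vertex), so $D$ is registered in the multiplicity of exactly one edge $pq$ incident to $p$. Hence $\deg_H(p)$ equals the number of Hamilton cycles of $G$ using both edges of $p$ at $v$.

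The crux is then a small combinatorial observation about $2$-subsets of a $3$-set: any two distinct pairs in $[E(v)]^2$ intersect in exactly one edge. Thus, given two distinct $v$-side vertices, say $\Set{a,b}$ and $\Set{a,c}$, their common edge is $a$, and I would argue that the sum of their degrees counts the Hamilton cycles through $a$. Indeed, a Hamilton cycle containing $a$ uses exactly one further edge at $v$, either $b$ or $c$, so it is counted in exactly one of $\deg_H(\Set{a,b})$, $\deg_H(\Set{a,c})$; conversely both of those degree-counts consist of Hamilton cycles through $a$. Therefore
\[
\deg_H(\Set{a,b}) + \deg_H(\Set{a,c}) = \#\Set{\text{Hamilton cycles of } G \text{ containing } a}.
\]

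To finish, I would invoke Thomason's Theorem~\ref{thm_finitecubicthomason}: because $G$ is cubic, all its degrees are odd, so every edge of $G$ lies on an even number of Hamilton cycles. In particular the right-hand side above is even, which is exactly the claim for the pair $\Set{a,b}, \Set{a,c}$; the two remaining pairs on the $v$-side are handled identically with common edge $b$ or $c$, and the whole argument applies verbatim to the $w$-side by symmetry. I do not expect a genuine obstacle here: the content of the lemma is entirely in correctly identifying each vertex-degree of $H$ with a Hamilton-cycle count and in the remark that distinct $2$-subsets of a $3$-element set always share a single common edge. Once those two identifications are in place, Thomason's parity result does all the remaining work, and no case analysis beyond the three symmetric choices of common edge is needed.
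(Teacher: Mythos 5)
Your proposal is correct and follows essentially the same argument as the paper: two distinct pairs in $[E(v)]^2$ meet in a single edge $e$, so the sum of their degrees in $H(G,v,w)$ counts the Hamilton cycles through $e$, which is even by Thomason's Theorem~\ref{thm_finitecubicthomason}. The only difference is that you spell out the bookkeeping (each Hamilton cycle through $p$ determines exactly one pair at $w$, and each cycle through $e$ uses exactly one further edge at $v$) that the paper leaves implicit.
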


\begin{proof}
Indeed, if say $p \neq q \in [E(v)]^2$, we have $p \cap q = \Set{e}$ for some edge $e \in E(v)$, as $G$ is cubic. So the sum of degrees $d(p)  + d(q)$ equals the number of Hamilton cycles in $G$ using the edge $e$, which is even by Theorem~\ref{thm_finitecubicthomason}.
\end{proof}

\begin{lemma}
\label{lem_parity2}
If $v$ and $w$ are distinct vertices of a finite cubic graph $G$, then all vertex degrees in $H(G,v,w)$ are of the same parity.
\end{lemma}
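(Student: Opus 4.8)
The plan is to reduce everything to Lemma~\ref{lem_parity} together with one global count linking the two sides of the bipartition of $H(G,v,w)$. First I would record that, since $G$ is cubic, each side $[E(v)]^2$ and $[E(w)]^2$ consists of exactly $\binom{3}{2}=3$ vertices, and that for $p \in [E(v)]^2$ the degree $d(p)$ in $H$ equals the number of Hamilton cycles of $G$ using precisely the two edges of $p$ at $v$. This is because $d(p)=\sum_{q} m(pq)$, where $m(pq)$ denotes the edge multiplicity, and every Hamilton cycle uses some edge-pair at $w$; so summing over all $q \in [E(w)]^2$ merely counts the Hamilton cycles constrained at $v$ (and symmetrically for the $w$-side).

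Next I would invoke Lemma~\ref{lem_parity}: for any two distinct vertices $p,q$ on the same side we have $d(p)+d(q)$ even, i.e.\ $d(p)\equiv d(q)\pmod 2$. Hence the three $v$-side degrees share a common parity $\epsilon_v$, and the three $w$-side degrees share a common parity $\epsilon_w$. The statement then amounts to proving $\epsilon_v=\epsilon_w$.

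The crux is to connect the two sides, and the idea is that both are governed by the same number. Since each Hamilton cycle of $G$ uses exactly one of the three edge-pairs at $v$, summing $d(p)$ over the $v$-side gives exactly the total number $N$ of Hamilton cycles of $G$; the analogous sum over the $w$-side also equals $N$. Using $3\equiv 1 \pmod 2$, this yields $N\equiv\epsilon_v\pmod 2$ and $N\equiv\epsilon_w\pmod 2$, so $\epsilon_v=\epsilon_w=N\bmod 2$, and all six vertex degrees of $H(G,v,w)$ share this common parity.

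I do not anticipate a genuine obstacle: the whole argument is a short parity count. The only real idea is the bridge between the two sides carried out in the last paragraph, realised by observing that each side's degree-sum equals the single quantity $N$; everything else is bookkeeping with the facts that $v$ and $w$ each have exactly three incident edge-pairs and that $3$ is odd.
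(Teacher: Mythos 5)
Your proof is correct and follows essentially the same route as the paper: Lemma~\ref{lem_parity} gives a common parity within each side of the bipartition, and the parities of the two sides are then linked through the equal degree sums together with the oddness of $3$. The only cosmetic difference is that you identify the common degree sum as the number $N$ of Hamilton cycles of $G$, whereas the paper simply writes it as $\lvert E(H)\rvert$ via the standard bipartite degree-sum identity --- these are the same quantity.
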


\begin{proof}
Suppose one vertex in $[E(v)]^2$ has odd (even) degree. Since $|[E(v)]^2|=3$, we can apply Lemma~\ref{lem_parity} twice to conclude that all degrees on the $[E(v)]^2$ side of our bipartite graph $H=H(G,v,w)$ are odd (even). Hence,
\[
\sum_{p \in [E(v)]^2} d_H(p) = |E(H)| = \sum_{p \in [E(w)]^2} d_H(p) 
\]
is odd (even). Applying Lemma~\ref{lem_parity} twice again, we see that also all degrees on the $[E(w)]^2$ side of our bipartite graph $H$ must be odd (even). Thus, all vertex degrees in $H(G,v,w)$ are of the same parity.
\end{proof}

\begin{theorem}
\label{thm_1ended3deg}
Every Hamiltonian one-ended cubic graph with end-degree 3 has at least two Hamilton cycles.
\end{theorem}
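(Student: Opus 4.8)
The plan is to argue by contradiction: assume $G$ is uniquely Hamiltonian, with sole Hamilton cycle $C$, and then manufacture a clash between Thomason's Theorem~\ref{thm_finitecubicthomason}, applied to finite contractions of $G$, and the fact that $G$ is one-ended. The first step is a finite approximation scheme. Since the end $\omega$ has edge-degree $3$, hence vertex-degree $3$ by Lemma~\ref{lem_degreescoincide}, Lemma~\ref{lem_menger} lets me fix an increasing exhaustion $S_1 \subseteq S_2 \subseteq \cdots$ of $V(G)$ by finite vertex sets with connected complements for which every cut $F_n := E(S_n, V \setminus S_n)$ has exactly three edges, meeting three distinct vertices on each side (the three vertex-disjoint rays to $\omega$ let me keep the cuts ``spread out'', which is what makes the contractions below simple graphs). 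Contracting $V \setminus S_n$ to a dummy vertex $w_n$ yields a finite cubic graph $\hat G_n$ in which $w_n$ has degree $3$; contracting in addition $S_n$ to a dummy $v_n$ yields a finite cubic graph $\hat L_n$ with two distinguished degree-$3$ vertices $v_n$ and $w_{n+1}$ joined through the finite ``annulus'' $S_{n+1} \setminus S_n$.

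Because $G$ is one-ended, any Hamilton cycle meets each finite cut $F_n$ in an even number of edges, hence---as $\cardinality{F_n}=3$ and the cut cannot be avoided---in exactly two. So restriction to $S_n$ (contracting the exterior) sends each Hamilton cycle of $G$ to one of $\hat G_n$ that uses a pair $\sigma \in [E(w_n)]^2$; call $\sigma$ \emph{extendable} if it arises this way, equivalently if $G[V \setminus S_n]$ carries two disjoint spanning rays to $\omega$ starting at the exterior endpoints of $\sigma$. Writing $a^{(n)}_\sigma$ for the number of Hamilton cycles of $\hat G_n$ using $\sigma$, the restriction map from Hamilton cycles of $G$ onto Hamilton cycles of $\hat G_n$ with extendable pair is surjective; under the uniqueness hypothesis its domain is a singleton, so for every $n$ there is exactly one Hamilton cycle of $\hat G_n$ with extendable pair, that is $\sum_{\sigma \text{ extendable}} a^{(n)}_\sigma = 1$.

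Now parity enters. Applying Theorem~\ref{thm_finitecubicthomason} to $\hat G_n$ as in Lemma~\ref{lem_parity}, the three numbers $a^{(n)}_\sigma$ share a common parity; as the extendable ones sum to the odd number $1$, that parity is odd, so every $a^{(n)}_\sigma$ is odd and hence positive. The crux is to upgrade this to: \emph{every} pair $\sigma \in [E(w_n)]^2$ is extendable. Let $M_n$ record the multiplicities of the Hamilton incidence multi-graph $H(\hat L_n, v_n, w_{n+1})$ of Definition~\ref{def_haminc}; its entries count the routings of a Hamilton cycle across the annulus from an inner to an outer pair, and one checks the transfer identity $a^{(n+1)}_\tau = \sum_\sigma a^{(n)}_\sigma \p{M_n}_{\sigma\tau}$. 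Reducing mod $2$ and using that all $a^{(n)}_\sigma$ and $a^{(n+1)}_\tau$ are odd forces every column sum of $M_n$ to be odd; but by Lemma~\ref{lem_parity2} all degrees of $H(\hat L_n, v_n, w_{n+1})$---the row sums of $M_n$ as well as its column sums---have the same parity, so every row sum of $M_n$ is odd too. Thus from each pair at level $n$ there is a routing across the annulus to some pair at level $n+1$. A K\"onig's lemma argument on this finitely-branching sequence of pairs then builds, from any starting pair, an infinite chain of compatible routings, i.e.\ two disjoint spanning rays of the exterior, witnessing that the starting pair is extendable. Hence all pairs at level $n$ are extendable, so $\sum_{\sigma \text{ extendable}} a^{(n)}_\sigma = \sum_\sigma a^{(n)}_\sigma \geq 3$, contradicting the value $1$ found above; therefore $G$ has a second Hamilton cycle.

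I expect the main obstacle to be the bookkeeping around the middle step: verifying that restriction to $S_n$ is genuinely a surjection onto the extendable Hamilton cycles of $\hat G_n$, that the exhaustion can be arranged so that $\hat G_n$ and $\hat L_n$ stay simple and cubic (so that Theorem~\ref{thm_finitecubicthomason} and Lemmas~\ref{lem_parity} and \ref{lem_parity2} apply verbatim), and that the transfer identity $a^{(n+1)} = a^{(n)} M_n$ holds exactly. The genuinely new idea, and the point where one-endedness is decisive, is the parity flip furnished by Lemma~\ref{lem_parity2}: it converts ``every pair is reachable from the core'' into ``every pair reaches outward to the end'', which is precisely what the K\"onig extension requires.
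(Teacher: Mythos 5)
Your proof is correct, and it is essentially the paper's own argument recast as a proof by contradiction: the same exhaustion by pairwise disjoint $3$-edge cuts with simple cubic minors $G_n$ and $G_{n,n+1}$, Thomason's Theorem~\ref{thm_finitecubicthomason} together with Lemmas~\ref{lem_parity} and~\ref{lem_parity2} to make all degrees of $H(G_{n,n+1},v_n,w_n)$ odd, and the same annulus gluing---your transfer identity read off at the single column $\Set{e_{n+1},f_{n+1}}$ is exactly the paper's Step~\ref{obs_parityone}, and your K\"onig chain of routings is the paper's recursive extension $A_0 \subseteq A_1 \subseteq \cdots$ with $A=\bigcup_{n} A_n$. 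The only wrinkle is the ``equivalently'' in your definition of extendable pairs (the two exterior spanning rays witness extendability only once $a^{(n)}_\sigma \geq 1$ is also known), but since oddness, hence positivity, of every $a^{(n)}_\sigma$ is in hand where you use this, it is cosmetic rather than a gap.
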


\begin{proof}
Let $C$ be a Hamilton cycle of $G$. By assumption on the degree of our end together with Lemma~\ref{lem_menger}, there is a sequence of pairwise disjoint edge cuts $F_n = E(S_n, V \setminus S_n)$ with $S_n$ finite, $|F_n| = 3$, $S_n \subsetneq S_{n+1}$, and $\bigcup_{n \in \N}  S_n = V(G)$. 

Let $F_n = \Set{e_n,f_n,g_n}$. As every Hamilton cycle of a locally finite graph intersects each finite cut in a positive, even number of edges, \cite[8.6.7 \& 8.7.1]{Diestel}, we may suppose that $e_n,f_n \in E(C)$ and $g_n \notin E(C)$ for all $n \in \N$. Let $G_n$ be the minor of $G$ where we identify $V \setminus S_n$ to a single dummy vertex $x_n$, and let $G_{n,n+1}$ be the minor of $G$ where we identify $S_n$ and $V \setminus S_{n+1}$ to dummy vertices $v_n$ and $w_n$ respectively. 

While a priori, $G_n$ and $G_{n,n+1}$ are multi-graphs (with possibly parallel edges at dummy vertices), we may assume they are simple: By Lemma~\ref{lem_degreescoincide}, there are three vertex-disjoint rays $R_1$, $R_2$ and $R_3$ leading to the single end $\omega$. Choose $N \in \N$ such that  $E(R_i) \cap F_n \neq \emptyset$ for all $n \geq N$ and all $i$. Since the $R_i$ are vertex-disjoint, it follows that all $x_n$, $v_n$ and $w_n$ have three distinct neighbours for all $n \geq N$. 

So by moving to a suitable subsequence, we may assume that all our minors $G_n$ and $G_{n,n+1}$ are simple cubic graphs. Moreover, in all cases, the corresponding restriction of $C$ witnesses that these minors are in fact Hamiltonian. 

Now, if some $G_n$ has two distinct Hamilton cycles both using the edge set $\Set{e_n,f_n}$, then, following the same strategy as in Theorem~\ref{thm_1ended2deg}, we may combine both with $C \restriction \p{V \setminus S_n}$ to obtain two distinct Hamilton cycles of $G$. Hence, we may assume for the remainder of the proof that for all $n \in \N$, the restriction $C \restriction G_n$ is the only Hamilton cycle of $G_n$ that uses $\Set{e_n,f_n}$. In particular, we are in the case where the assumptions of the following claim are satisfied for all $n \in \N$:

 \begin{claim*}
 \label{claim_extend}
If $G_n$ and $G_{n+1}$ have unique Hamilton cycles using the edge set $\Set{e_n,f_n}$ and $\Set{e_{n+1},f_{n+1}}$ respectively, then every Hamilton cycle of $G_{n}$ extends to a Hamilton cycle of $G_{n+1}$.
 \end{claim*}

To see why the claim implies the theorem, note that by Theorem~\ref{thm_finitecubicthomason}, the edge $e_0$ is contained in an even number of Hamilton cycles of $G_0$, and hence there must be a second Hamilton cycle $A_0$ of $G_0$ which uses the edge set say $\Set{e_0,g_0}$. Applying the claim recursively, we obtain a sequence of Hamilton cycles $A_n$ of $G_n$ such that $A_{n+1}$ extends $A_n$ for all $n \in \N$. Then $A = \bigcup_{n \in \N} A_n$ a Hamilton cycle of $G$, which is distinct from $C$ witnessed by $g_0 \in E(A) \setminus E(C)$.

It remains to prove the claim. Assume that $G_n$ and $G_{n+1}$ have unique Hamilton cycles using the edge sets $\Set{e_n,f_n}$ and $\Set{e_{n+1},f_{n+1}}$ respectively, and consider the Hamilton incidence graph $H_n=H(G_{n,n+1},v_n,w_n)$ of $G_{n,n+1}$ with respect to its two dummy vertices. 

\begin{step}
\label{obs_parityone}
We have $d_{H_{n}} (\Set{e_{n+1},f_{n+1}}) = 1$.
\end{step}

This is where we use the assumption that $G_n$ and $G_{n+1}$ have unique Hamilton cycles using the edge sets $\Set{e_n,f_n}$ and $\Set{e_{n+1},f_{n+1}}$ respectively. Indeed, note first that $C \restriction G_{n,n+1}$ witnesses  that $d_{H_{n}} (\Set{e_{n+1},f_{n+1}}) \geq 1$. Next, since there is a unique Hamilton cycle $A$ of $G_n$ that uses $\Set{e_n,f_n}$, Theorem~\ref{thm_finitecubicthomason} implies that $G_n$ must have two further Hamilton cycles $B$ and $C$ using the edge sets $\Set{e_n,g_n}$ and $\Set{f_n,g_n}$ respectively. Thus, if $d_{H_{n}} (\Set{e_{n+1},f_{n+1}}) \geq 2$, i.e.\ if there are two distinct Hamilton cycles of $G_{n,n+1}$ using the edge set $\Set{e_{n+1}, f_{n+1}}$, then we can combine them suitably with either $A$, $B$ or $C$ to obtain two distinct Hamilton cycles of $G_{n+1}$ both using the edge set $\Set{e_{n+1},f_{n+1}}$, a contradiction.

\begin{step}
\label{obs_parity2}
Every vertex of $H_n$ has odd degree. 
\end{step}

Since  Step~\ref{obs_parityone} implies in particular that $d_{H_n} (\Set{e_{n+1},f_{n+1}})$ is odd, Step~\ref{obs_parity2} is immediate from Lemma~\ref{lem_parity2}.

\begin{step}
\label{obs_extending}
Every Hamilton cycle of $G_{n}$ extends to a Hamilton cycle of $G_{n+1}$.
\end{step}
Suppose we have a Hamilton cycle $A$ of $G_n$ using the edge set $p \in [F_n]^2$. By Step~\ref{obs_parity2}, we know that in particular $d_{H_n}(p) \geq 1$, which means there is a Hamilton cycle $B$ of $G_{n,n+1}$ using the edge set $p$. Then $A \cup B$ is a Hamilton cycle of $G_{n+1}$ extending $A$. This completes the proof of the final step of the claim, and so the theorem follows.
\end{proof}

\section{Examples witnessing optimality}
\label{sec_examples}

In the previous section, we have seen that Smith's Theorem~\ref{thm_finitecubic} extends to the one-ended cubic case where the end has degree at most $3$. In this section, we show that Theorem~\ref{thm_finitecubic} does not extend to the two-ended case, and that Thomason's Theorem~\ref{thm_finitecubicthomason} does not extend to the infinite case at all. 

\subsection{Ends with degree two}

\begin{exam}
\label{exam31}
There is a one-ended cubic graph with end degree $2$ that has precisely two Hamilton cycles. In particular, there are edges which do not lie on an even number of Hamilton circles.
\end{exam}

\begin{proof}[Construction]
Consider the cubic, one-way infinite ladder as in Figure~\ref{cubicdoubleladder}. Clearly, it has precisely one end, which has degree $2$.
\begin{figure}[ht!]
\centering
\begin{tikzpicture}[scale=.7]

\node[draw, circle,scale=.4, fill] (blob1) at (0.6,0) {};

\foreach \s in {1,...,4}
{
\node[draw, circle,scale=.4, fill] (blobu\s) at (2*\s,1) {};
\node[draw, circle,scale=.4, fill] (blobl\s) at (2*\s,-1) {};
}
\node[draw, circle,scale=.4, fill] (blobm1) at (2,0) {};
\draw (blob1) -- (blobu1);
\draw (blob1) -- (blobm1);
\draw (blob1) -- (blobl1);

\foreach \s in {1,...,3}
{
\pgfmathsetmacro\t{\s+1}
\draw (blobu\s) -- (blobu\t);
\draw (blobl\s) -- (blobl\t);
\draw (blobu\s) -- (blobl\s);
}
\draw (blobu4) -- (blobl4);

\draw (blobu4) -- (9.5,1);
\draw (blobl4) -- (9.5,-1);

\node[draw, circle,scale=.2, fill] (blobm1) at (10,0) {};
\node[draw, circle,scale=.2, fill] (blobm1) at (10.5,0) {};
\node[draw, circle,scale=.2, fill] (blobm1) at (11,0) {};

\draw (1.2,.8) node {$e_1$};
\draw (1.2,-.8) node {$e_2$};

\draw (2.3,.5) node {$f_1$};
\draw (2.3,-.5) node {$f_2$};

\end{tikzpicture}
\caption{The infinite cubic ladder.}
\label{cubicdoubleladder}
\end{figure}
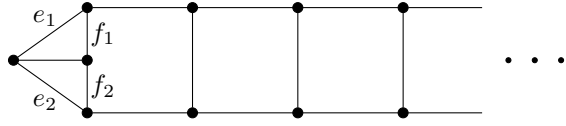
Moreover, it is not hard to check that this graph has precisely two Hamilton cycles. In particular, there are edges which do not lie on an even number of Hamilton circles. In our example, these are the edges $e_1,e_2,f_1$ and $f_2$.
\end{proof}

For completeness, we record again:

\begin{exam}
\label{example2ended}
The double ladder is a uniquely Hamiltonian, two-ended cubic graph with both ends of degree 2.
\end{exam}

\subsection{Ends with degree three}

\begin{exam}
\label{exam33}
There is a one-ended cubic graph with end degree $3$ that has precisely two Hamilton cycles. In particular, there are edges which do not lie on an even number of Hamilton circles.
\end{exam}

\begin{proof}[Construction]
Let $\set{T_n}:{n \in \N}$ be a family of disjoint graphs such that $T_0 \cong T'$ and $T_n \cong T$ for all $n \geq 1$. Here, $T$ is the Tutte fragment from Figure~\ref{TutteFragment}, and $T'$ is its cubic quotient. We use the same of vertices in $T$ and $T'$ as above, and by $v_n,a_n,b_n,c_n \in T_n$ etc.\ we refer to the respective copies of the vertices $v,a,b,c \in T$.

We now construct a sequence $\set{G_n}:{n \in \N}$ of finite graphs as follows: Put $G_0 = T_0$,
and define 
$$G_1 = \p{ G_0 - v_0 \sqcup T_1} / \sim \text{ where } a_0 \sim x_1, \;  b_0 \sim y_1, \;  c_0 \sim z_1.$$
We think of this operation as replacing the vertex $v_0$ and its incident edges by a new copy of $T$, where the leaves of the new $T$ are suitably identified with the old neighbours of $v_0$. Similarly, assuming $G_n$ has already been defined, let 
$$G_{n+1} = \p{ G_n - v_n \sqcup T_{n+1}} / \sim \text{ where } a_n \sim x_{n+1}, \;  b_n \sim y_{n+1}, \;  c_n \sim z_{n+1}.$$
In other words, in every step, we replace the most recent copy of the vertex $v$ by a new copy of $T$. 

Note that $G_n - v_n \subset G_{n+1} - v_{n+1}$ for all $n$, so we may denote by $G$ be the direct limit of these graphs. (Alternatively, $|G|$ can be viewed as  the inverse limit of the $G_n$ under natural minor relation $G_n \preceq G_{n+1}$, cf.~\cite[\S8.8]{Diestel}, and so $G$ as a 1-complex is given by the space $|G|$ minus its unique end). 

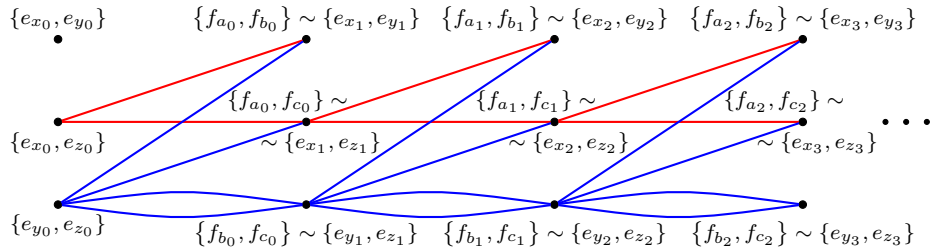
\begin{figure}[ht!]
\centering
\begin{tikzpicture}[scale=.55]

\node[draw, circle,scale=.3, fill] (blobxy) at (0,4) {};
\node[draw, circle,scale=.3, fill] (blobxz) at (0,2) {};
\node[draw, circle,scale=.3, fill] (blobyz) at (0,0) {};

\draw (0,4.5) node {\footnotesize{$\Set{e_{x_0},e_{y_0}}$}};
\draw (0,1.5) node {\footnotesize{$\Set{e_{x_0},e_{z_0}}$}};
\draw (0,-.5) node {\footnotesize{$\Set{e_{y_0},e_{z_0}}$}};

\node[draw, circle,scale=.3, fill] (blobab) at (6,4) {};
\node[draw, circle,scale=.3, fill] (blobac) at (6,2) {};
\node[draw, circle,scale=.3, fill] (blobbc) at (6,0) {};

\draw (6,4.5) node {\footnotesize{$\Set{f_{a_0},f_{b_0}} \sim \Set{e_{x_1},e_{y_1}}$}};
\draw (5.55,2.5) node {\footnotesize{$\Set{f_{a_0},f_{c_0}} \sim $}};
\draw (6.35,1.5) node {\footnotesize{$\sim \Set{e_{x_1},e_{z_1}}$}};
\draw (6,-.7) node {\footnotesize{$\Set{f_{b_0},f_{c_0}} \sim \Set{e_{y_1},e_{z_1}}$}};

\draw[red, thick] (blobxz) -- (blobab);
\draw[red, thick] (blobxz) -- (blobac);

\draw[blue, thick] (blobyz) -- (blobab);
\draw[blue, thick] (blobyz) -- (blobac);
\draw[blue, thick] (blobyz) .. controls (3,.4) .. (blobbc);
\draw[blue, thick] (blobyz) .. controls (3,-.4) .. (blobbc);

\node[draw, circle,scale=.3, fill] (blob3a) at (12,4) {};
\node[draw, circle,scale=.3, fill] (blob3b) at (12,2) {};
\node[draw, circle,scale=.3, fill] (blob3c) at (12,0) {};

\draw (12,4.5) node {\footnotesize{$\Set{f_{a_1},f_{b_1}} \sim \Set{e_{x_2},e_{y_2}}$}};
\draw (11.55,2.5) node {\footnotesize{$\Set{f_{a_1},f_{c_1}} \sim $}};
\draw (12.35,1.5) node {\footnotesize{$\sim \Set{e_{x_2},e_{z_2}}$}};
\draw (12,-.7) node {\footnotesize{$\Set{f_{b_1},f_{c_1}} \sim \Set{e_{y_2},e_{z_2}}$}};

\draw[red, thick] (blobac) -- (blob3a);
\draw[red, thick] (blobac) -- (blob3b);

\draw[blue, thick] (blobbc) -- (blob3a);
\draw[blue, thick] (blobbc) -- (blob3b);
\draw[blue, thick] (blobbc) .. controls (9,.4) .. (blob3c);
\draw[blue, thick] (blobbc) .. controls (9,-.4) .. (blob3c);

\node[draw, circle,scale=.3, fill] (blob4a) at (18,4) {};
\node[draw, circle,scale=.3, fill] (blob4b) at (18,2) {};
\node[draw, circle,scale=.3, fill] (blob4c) at (18,0) {};

\draw (18,4.5) node {\footnotesize{$\Set{f_{a_2},f_{b_2}} \sim \Set{e_{x_3},e_{y_3}}$}};
\draw (17.55,2.5) node {\footnotesize{$\Set{f_{a_2},f_{c_2}} \sim $}};
\draw (18.35,1.5) node {\footnotesize{$\sim \Set{e_{x_3},e_{z_3}}$}};
\draw (18,-.7) node {\footnotesize{$\Set{f_{b_2},f_{c_2}} \sim \Set{e_{y_3},e_{z_3}}$}};

\draw[red, thick] (blob3b) -- (blob4a);
\draw[red, thick] (blob3b) -- (blob4b);

\draw[blue, thick] (blob3c) -- (blob4a);
\draw[blue, thick] (blob3c) -- (blob4b);
\draw[blue, thick] (blob3c) .. controls (15,.4) .. (blob4c);
\draw[blue, thick] (blob3c) .. controls (15,-.4) .. (blob4c);

\node[draw, circle,scale=.2, fill] (blobm1) at (20,2) {};
\node[draw, circle,scale=.2, fill] (blobm1) at (20.5,2) {};
\node[draw, circle,scale=.2, fill] (blobm1) at (21,2) {};

\end{tikzpicture}

\caption{The incidence graph for Hamilton cycles of $G$.}
\label{TutteIncidenceG}
\end{figure}

Since $T'$ is 3-edge connected, it follows that $G$ is a one-ended cubic graph with end-degree $3$. Writing $S_n = V(G_n) \setminus \singleton{v_n}$, we see that the end-degree of $G$ is witnessed by the $3$-edge cuts 
$$F_n = E(S_n, V(G) \setminus S_n).$$ 
Moreover, if we define, as in the proof of Theorem~\ref{thm_1ended3deg}, the graphs $G_{n,n+1}$ to be the minors of $G$ where we identify $S_n$ and $V(G) \setminus S_{n+1}$ to dummy vertices $\alpha_n$ and $\beta_n$ respectively, then our construction of $G$ guarantees the existence of isomorphisms
$$\varphi_n \colon T' \to G_{n,n+1} \text{ such that }\varphi_n(w) = \alpha_n \text{ and } \varphi_n(v) = \beta_n$$
such that, due to our choice of the quotient patterns $\sim$, 
$$(\dagger) \quad \quad \varphi_n(f_a) = \varphi_{n+1}(e_x), \; \varphi_n(f_b) = \varphi_{n+1}(e_y) \text{ and }  \varphi_n(f_c) = \varphi_{n+1}(e_z)$$
for all $n \in \N$.

Next, recall that every Hamilton cycle $C$ of $G$ restricts, for any $n \in \N$, to a Hamilton cycle of $G_{n,n+1}$, and therefore looks locally like one of the six Hamilton cycles of Figure~\ref{TutteFragmentHamilton}. Pasting together the individual Hamilton incidence graphs of $G_{n,n+1}$ (cf.\ Figure~\ref{TutteIncidence}) using the identities provided in $(\dagger)$ gives the picture of Figure~\ref{TutteIncidenceG}. And since for every Hamilton cycle $C$ of $G$ we have
$$E(C \restriction G_{n,n+1}) \cap E(\beta_n) = E(C \restriction G_{n+1,n+2}) \cap E(\alpha_{n+1})$$
we see that Hamilton cycles of $G$ are in 1-1 correspondence with those rays in the multi-graph in Figure~\ref{TutteIncidenceG} that pick a single edge from each level.

To complete the construction of Example~\ref{exam33}, we now consider the graph 
$$H = \p{T \sqcup G - w_0} / \sim \text{ where } \ell_x \sim z_0, \;  \ell_y \sim y_0, \;  \ell_z \sim x_0.$$
Figure~\ref{TutteIncidenceH} shows the analogue of Figure~\ref{TutteIncidenceG} for our new graph $H$.
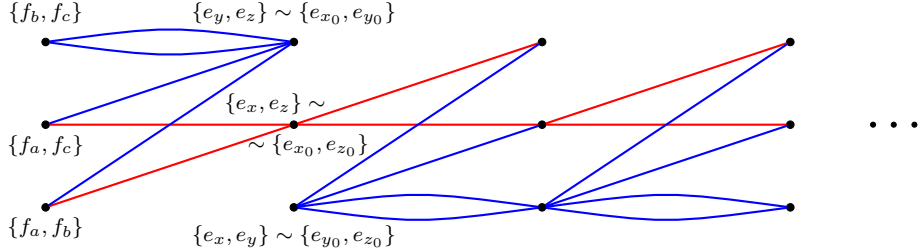
\begin{figure}[ht!]
\centering
\begin{tikzpicture}[scale=.55]

\node[draw, circle,scale=.3, fill] (blobxy) at (0,4) {};
\node[draw, circle,scale=.3, fill] (blobxz) at (0,2) {};
\node[draw, circle,scale=.3, fill] (blobyz) at (0,0) {};

\draw (0,4.7) node {\footnotesize{$\Set{f_b,f_c}$}};
\draw (0,1.5) node {\footnotesize{$\Set{f_a,f_c}$}};
\draw (0,-.5) node {\footnotesize{$\Set{f_a,f_b}$}};

\node[draw, circle,scale=.3, fill] (blobab) at (6,4) {};
\node[draw, circle,scale=.3, fill] (blobac) at (6,2) {};
\node[draw, circle,scale=.3, fill] (blobbc) at (6,0) {};

\draw (6,4.7) node {\footnotesize{$\Set{e_y,e_z} \sim \Set{e_{x_0},e_{y_0}}$}};
\draw (5.55,2.5) node {\footnotesize{$\Set{e_x,e_z} \sim $}};
\draw (6.35,1.5) node {\footnotesize{$\sim \Set{e_{x_0},e_{z_0}}$}};
\draw (6,-.7) node {\footnotesize{$\Set{e_x,e_y} \sim \Set{e_{y_0},e_{z_0}}$}};

\draw[red, thick] (blobxz) -- (blobac);
\draw[red, thick] (blobyz) -- (blobac);

\draw[blue, thick] (blobxz) -- (blobab);
\draw[blue, thick] (blobyz) -- (blobab);
\draw[blue, thick] (blobxy) .. controls (3,4.4) .. (blobab);
\draw[blue, thick] (blobxy) .. controls (3,3.6) .. (blobab);

\node[draw, circle,scale=.3, fill] (blob3a) at (12,4) {};
\node[draw, circle,scale=.3, fill] (blob3b) at (12,2) {};
\node[draw, circle,scale=.3, fill] (blob3c) at (12,0) {};

\draw[red, thick] (blobac) -- (blob3a);
\draw[red, thick] (blobac) -- (blob3b);

\draw[blue, thick] (blobbc) -- (blob3a);
\draw[blue, thick] (blobbc) -- (blob3b);
\draw[blue, thick] (blobbc) .. controls (9,.4) .. (blob3c);
\draw[blue, thick] (blobbc) .. controls (9,-.4) .. (blob3c);

\node[draw, circle,scale=.3, fill] (blob4a) at (18,4) {};
\node[draw, circle,scale=.3, fill] (blob4b) at (18,2) {};
\node[draw, circle,scale=.3, fill] (blob4c) at (18,0) {};

\draw[red, thick] (blob3b) -- (blob4a);
\draw[red, thick] (blob3b) -- (blob4b);

\draw[blue, thick] (blob3c) -- (blob4a);
\draw[blue, thick] (blob3c) -- (blob4b);
\draw[blue, thick] (blob3c) .. controls (15,.4) .. (blob4c);
\draw[blue, thick] (blob3c) .. controls (15,-.4) .. (blob4c);

\node[draw, circle,scale=.2, fill] (blobm1) at (20,2) {};
\node[draw, circle,scale=.2, fill] (blobm1) at (20.5,2) {};
\node[draw, circle,scale=.2, fill] (blobm1) at (21,2) {};

\end{tikzpicture}

\caption{The incidence graph for Hamilton cycles of $H$.}
\label{TutteIncidenceH}
\end{figure}
By the same reasoning as above, Hamilton cycles of $H$ are in 1-1 correspondence with those rays in the multi-graph in Figure~\ref{TutteIncidenceH} that pick a single edge from each level. But this means that $H$ has precisely two Hamilton cycles: Only the two left-most red edge can be extended to a ray through the multi-graph using a single edge from each level, and both these extensions are unique.
\end{proof}

\begin{exam}
\label{example2ended}
There is a uniquely Hamiltonian, two-ended cubic graph with both ends of degree 3.
\end{exam}

\begin{proof}[Construction]
For the construction, take a disjoint copy $G'$ of $G$ from the graph as constructed in the previous construction (cf. Figure~\ref{TutteIncidenceG}). By $w'_0,x'_0,y'_0,z'_0 \in G'$ etc.~we refer to the respective copies of the vertices $w_0,x_0,y_0,z_0 \in G$. Now consider the graph 
$$H' = \p{G' - w'_0 \sqcup G - w_0} \text{ with three added edges } x'_0z_0, \;  y'_0 y_0, \; \text{and} \;  z'_0x_0.$$
Then $H'$ is a $2$-ended cubic graph with both ends of degree $3$. Figure~\ref{TutteIncidenceH'} shows the analogue of Figure~\ref{TutteIncidenceH} for our new graph $H'$.

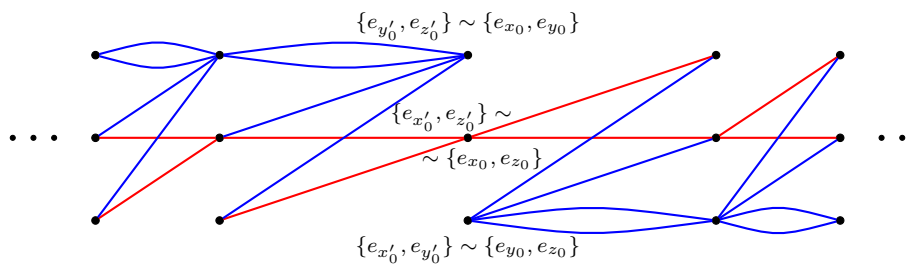
\begin{figure}[ht!]
\centering
\begin{tikzpicture}[scale=.55]

\node[draw, circle,scale=.2, fill] (blobm1) at (-4,2) {};
\node[draw, circle,scale=.2, fill] (blobm1) at (-4.5,2) {};
\node[draw, circle,scale=.2, fill] (blobm1) at (-5,2) {};

\node[draw, circle,scale=.3, fill] (blob1) at (-3,4) {};
\node[draw, circle,scale=.3, fill] (blob2) at (-3,2) {};
\node[draw, circle,scale=.3, fill] (blob3) at (-3,0) {};

\node[draw, circle,scale=.3, fill] (blobxy) at (0,4) {};
\node[draw, circle,scale=.3, fill] (blobxz) at (0,2) {};
\node[draw, circle,scale=.3, fill] (blobyz) at (0,0) {};

\draw[red, thick] (blob2) -- (blobxz);
\draw[red, thick] (blob3) -- (blobxz);

\draw[blue, thick] (blob2) -- (blobxy);
\draw[blue, thick] (blob3) -- (blobxy);
\draw[blue, thick] (blob1) .. controls (-1.5,4.4) .. (blobxy);
\draw[blue, thick] (blob1) .. controls (-1.5,3.6) .. (blobxy);

\node[draw, circle,scale=.3, fill] (blobab) at (6,4) {};
\node[draw, circle,scale=.3, fill] (blobac) at (6,2) {};
\node[draw, circle,scale=.3, fill] (blobbc) at (6,0) {};

\draw (6,4.7) node {\footnotesize{$\{e_{y'_0},e_{z'_0}\} \sim \Set{e_{x_0},e_{y_0}}$}};
\draw (5.6,2.5) node {\footnotesize{$\{e_{x'_0},e_{z'_0}\} \sim $}};
\draw (6.35,1.5) node {\footnotesize{$\sim \Set{e_{x_0},e_{z_0}}$}};
\draw (6,-.7) node {\footnotesize{$\{e_{x'_0},e_{y'_0}\} \sim \Set{e_{y_0},e_{z_0}}$}};

\draw[red, thick] (blobxz) -- (blobac);
\draw[red, thick] (blobyz) -- (blobac);

\draw[blue, thick] (blobxz) -- (blobab);
\draw[blue, thick] (blobyz) -- (blobab);
\draw[blue, thick] (blobxy) .. controls (3,4.4) .. (blobab);
\draw[blue, thick] (blobxy) .. controls (3,3.6) .. (blobab);

\node[draw, circle,scale=.3, fill] (blob3a) at (12,4) {};
\node[draw, circle,scale=.3, fill] (blob3b) at (12,2) {};
\node[draw, circle,scale=.3, fill] (blob3c) at (12,0) {};

\draw[red, thick] (blobac) -- (blob3a);
\draw[red, thick] (blobac) -- (blob3b);

\draw[blue, thick] (blobbc) -- (blob3a);
\draw[blue, thick] (blobbc) -- (blob3b);
\draw[blue, thick] (blobbc) .. controls (9,.4) .. (blob3c);
\draw[blue, thick] (blobbc) .. controls (9,-.4) .. (blob3c);

\node[draw, circle,scale=.3, fill] (blob4a) at (15,4) {};
\node[draw, circle,scale=.3, fill] (blob4b) at (15,2) {};
\node[draw, circle,scale=.3, fill] (blob4c) at (15,0) {};

\draw[red, thick] (blob3b) -- (blob4a);
\draw[red, thick] (blob3b) -- (blob4b);

\draw[blue, thick] (blob3c) -- (blob4a);
\draw[blue, thick] (blob3c) -- (blob4b);
\draw[blue, thick] (blob3c) .. controls (13.5,.4) .. (blob4c);
\draw[blue, thick] (blob3c) .. controls (13.5,-.4) .. (blob4c);

\node[draw, circle,scale=.2, fill] (blobm1) at (16,2) {};
\node[draw, circle,scale=.2, fill] (blobm1) at (16.5,2) {};
\node[draw, circle,scale=.2, fill] (blobm1) at (17,2) {};

\end{tikzpicture}

\caption{The incidence graph for Hamilton cycles of $H'$.}
\label{TutteIncidenceH'}
\end{figure}
By the same reasoning as before, Hamilton cycles of $H'$ correspond in a 1-1 fashion to those double rays in the multi-graph in Figure~\ref{TutteIncidenceH'} that pick a single edge from each level. But then it is obvious that $H'$ has a unique Hamilton cycle, which corresponds to the double ray formed by the middle horizontal edges.
\end{proof}

\end{document}